\newtheorem{thm}{Theorem}[section]
\newtheorem{lem}[thm]{Lemma}
\newtheorem{defi}[thm]{Definition}
\newtheorem{prop}[thm]{Proposition}
\newtheorem{exam}[thm]{Example}
\theoremstyle{remark}
\newtheorem{rema}[thm]{Remark}
\newtheorem{nota}[thm]{Notation}
\newcommand{\N}{\mathbb{N}}
\newcommand{\E}{\mathcal{E}}
\DeclareMathOperator{\rank}{rank}
\DeclareMathOperator{\coker}{coker}
\DeclareMathOperator{\sgn}{sgn}
\newcommand{\Ceq}{\sim_{C^*}}
\newcommand{\Keq}{\sim_{K}}
\newcommand{\Meq}{\sim_{M}}
\newcommand{\MCeq}{\sim_{M'}}
\begin{document}

\title{Geometric classification of simple graph algebras}
\author{Adam P. W. S{\o}rensen}
\address{Department of Mathematical Sciences\\
University of Copenhagen\\
Universitetsparken 5, DK-2100, Copenhagen \O, Denmark}
\email{apws@math.ku.dk}

\keywords{Graph algebras, Classification, Flow equivalence}

\subjclass[2000]{Primary: 
	46L35; 
	Secondary:
	37B10; 
	}

\date{\today}

\thanks{
This research was supported by the Danish National Research Foundation through the Centre for Symmetry and Deformation.}

\begin{abstract}
Inspired by Franks' classification of irreducible shifts of finite type we provide a short list of allowed moves on graphs that preserves the stable isomorphism class of the associated $C^*$-algebras.
We show that if two graphs have stably isomorphic and simple unital algebras then we can use these moves to transform one into the other. 
\end{abstract}

\maketitle

\section{Introduction}

In \cite{ParrySullivan} Parry and Sullivan answered Bowens question, ``what is the equivalence relation on non-negative integral matrices induced by flow equivalence?'' 
They did this by showing that flow equivalence is generated by two moves on matrices, \emph{strong shift equivalence} and what is now known as the \emph{Parry-Sullivan move}.
Using this result, Franks classified irreducible subshifts of finite type up to flow equivalence \cite{franks_flowequivalence}.

Cuntz and Krieger noticed a connection between $C^*$-algebras and subshifts of finite type, and in \cite{cuntzKrieger_markovChains} they associated to any $0$-$1$ valued square matrix $A$ with no zero rows or columns a $C^*$-algebra $\mathcal{O}_A$.
These algebras are now known as Cuntz-Krieger algebras.
They reflect many of the properties of the matrix, for instance if $A$ is an irreducible non-permutation matrix then $\mathcal{O}_A$ is simple, and they are an invariant for flow equivalence in the following sense: if $B$ and $C$ are flow equivalent irreducible matrices then $\mathcal{O}_B$ is stably isomorphic to $\mathcal{O}_C$.
The latter observation raised the question whether flow equivalence of irreducible matrices is equivalent to stable isomorphism of the associated $C^*$-algebras.
Building on an idea by Cuntz, R{\o}rdam answered this question in the negative in  \cite{rordam_classCuntzKriger}.
R{\o}rdam showed that the relation on (irreducible non-permutation $0$-$1$ valued square) matrices $A$ and $B$ given by $A \sim B$ if and only if $\mathcal{O}_A$ is stably isomorphic to $\mathcal{O}_B$, is generated not only by strong shift equivalence and the Parry-Sullivan move, but also the relation 
\[
 A \sim A\_ =	\left( \begin{array}{cc|cccc} 
 								1 & 1 & 1 & 0 & 0 & \cdots \\
 								1 & 1 & 0 & 0 & 0 & \cdots \\
 								\hline 
 								1 & 0 & \multicolumn{4}{c}{}\\
 								0 & 0 & \multicolumn{4}{c}{A} \\ 
 								\vdots & \vdots & \multicolumn{4}{c}{}
 						  \end{array} \right).
\]
The operation that takes $A$ to $A\_$ is called a \emph{Cuntz splice}. 
It is known, due to the work of Franks, that two irreducible matrices $A, B$ are flow equivalent if and only if:
\begin{enumerate}
	\item $\coker(I - A) \cong \coker(I - B)$, and
	\item $\det(I - A) = \det(I - B)$.
\end{enumerate}
In fact, given $\coker(I - A)$ we can easily construct the Smith normal form, $S$ say, of $I - A$, and since $|\det(I - A)| = \det(S)$ we can reduce $2$ to $\sgn \det(I - A) = \sgn \det(I - B)$.
The importance of the relation $A \sim A\_$ becomes clear, once we notice that $\coker(I - A) \cong \coker(I - A\_)$, but $\det(I - A) = - \det(I - A\_)$.
Hence, R{\o}rdam's result states that the determinant of $I - A$ is not an obstruction for stable isomorphism of simple Cuntz-Krieger algebras, so they are classified by their $K$-theory, as $K_0(\mathcal{O}_A) \cong \coker(I - A)$.

In the reducible matrix/non-simple $C^*$-algebra case, Huang has classified reducible subshifts of finite type using the so-called $K$-web, \cite{huang_kweb}.
Restorff used results of Boyle and Huang (\cite{mbdh_poseteq, boyle_floweq}) to prove that Cuntz-Kriger algebras with finitely many ideals are classified by ideal related $K$-theory, \cite{restorff_CK}.

Graph algebras are a generalization of Cuntz-Krieger algebras.
For some of the first steps towards this generalization see \cite{ew_japon} and \cite{kprr_groupoids}. 
The simple graph algebras are either purely infinite or AF, and so are classifiable by $K$-theory. 
Certain non-simple graph algebras have also been classified using $K$-theoretic methods, for instance the case of precisely one ideal is handled in \cite{eilersTomforde}, and some linear ideal latices are considered in \cite{err_malaysian}. 
These results are proved using heavy classification machinery, and using that the class of graph algebras behaves nicely with respect to classification. 

In this paper we take an approach that is less $K$-theoretic and much closer to how the subshifts of finite type were classified, as we study moves on graphs that preserve the stable isomorphism class of the involved graph algebras.
Our focus will be on graphs with finitely many vertices, i.e. on unital graph algebras, since this makes it seem plausible that a finite number of moves can transform related graphs into one another. 
One could hope to find a (short) list of moves such that two graphs have stably isomorphic $C^*$-algebras if and only if we can transform one into the other using these moves, similar to the situation for flow equivalence.
Moves on special types of graphs have been studied in \cite{efw_cuntzkriegerclass, ddns_eqgraphs} where the graphs must have either one or zero edges between any two vertices, and in \cite{ers_amp} where the graphs must have either infinitely many or zero edges between any two vertices.
Leavitt path algebras, the the algebraic cousins of the analytic graph $C^*$-algebras, have also been studied using flow equivalence techniques and moves. 
In \cite{alps_flow} the isomorphism question for purely infinite simple algebras is studied using Franks' invariant, and in \cite{aalp_gauge} an algebraic version of the gauge-invariant uniqueness theorem is used to provide moves on graph that preserves isomorphism of Leavitt path algebras. 
(The author is grateful to Enrique Pardo for calling his attention to this work.)
 
Inspired by the moves used in \cite{ers_amp} to control infinite emitters, we will study arbitrary graphs and find a short list of moves that suffices if the involved graph algebras are simple and unital (for a precise statement see Theorem~\ref{mainThm}).
We stress that the moves (with one exception) preserve the stable isomorphism class of any graph algebra, but that we only prove they generate the equivalence relation given by stable isomorphism of the algebras in the class of unital simple graph algebras. 
As we already noted, simple unital graph algebras are classified by $K$-theory, so the chronology here is backwards when compared to subshifts of finite type. 
Hopefully classification of graph algebras by moves will one day catch up and maybe even overtake classification by $K$-theoretic means. 

The basic moves we use to manipulate graphs are:

\begin{enumerate}
	\item[{\tt (S)}] Remove a source, if it is a regular vertex,
	\item[{\tt (I)}] In-split the graph (as described in Theorem~\ref{inSplit}),
	\item[{\tt (O)}] Out-split the graph (as described in Theorem~\ref{outSplit}),
	\item[{\tt (R)}] Reduction (as described in Proposition~\ref{moveR}). 
\end{enumerate}

In- and out-splitings for graph algebras originate from \cite{batesFlow}.
The moves {\tt (S)}, {\tt (I)}, {\tt (O)}, and {\tt (R)} alone are not enough: we also need a version of the Cuntz splice (see Definition~\ref{cuntzSplice}) to fix the determinant of the adjacency matrices.
Somewhat surprisingly the Cuntz splice is only needed when we study graphs without any infinite emitters. 

\section{Notation}

We use this short section to fix notation and give standard definitions. 
First and foremost: in this paper we follow the convention for graph algebras used in (for instance) \cite{batesFlow}, but not in (for instance) \cite{raeburn_book}.

We now define a few graph concepts, and give the definition of a graph algebra. 

\begin{defi}
A graph $G$ is a 4-tuple $G = (G^0, G^1, r,s)$ consisting of a set of verices, $G^0$, a set of edges, $G^1$, and two maps $r,s \colon G^1 \to G^0$ specifying the range and source of any edge. 
\end{defi}

\begin{defi}
A path in a graph is a finite sequence of edges $e_1 e_2 \cdots e_n$ such that $r(e_i) = s(e_{i+1})$ for $i=1,2,\ldots, n-1$.
We extend the range and source maps to paths by putting $s(e_1 e_2 \cdots e_n) = s(e_1)$ and $r(e_1 e_2 \cdots e_n) = r(e_n)$. 

A loop is a path $\alpha = \alpha_1 \alpha_2 \cdots \alpha_n$ such that $s(\alpha) = r(\alpha)$, and a simple loop is a loop such that $r(\alpha_i) \neq r(\alpha)$ for any $i \neq n$. 

We say that a vertex $v$ supports a loop or is the base of a loop, if there is some loop $\alpha$ such that $s(\alpha) = v = r(\alpha)$.
\end{defi}

\begin{nota}
Let $G$ be a graph, and let $u,v$ be vertices. 
We write $u \geq v$ if there is a path from $u$ to $v$ or $u = v$. 

If $H$ is a set of vertices we write $H \geq v$ if there is a vertex $w \in H$ such that $w \geq v$, we write $v \geq H$ if there is some vertex $w \in H$ such that $v \geq w$. 
\end{nota}

\begin{defi}
Let $G$ be a graph.  
A vertex $v \in G^0$ is called a source if does not receive any edges, i.e. $r^{-1}(v) = \emptyset$. 
We call a vertex $u \in G^0$ a sink if it does not emit any edges, that is $s^{-1}(u) = \emptyset$. 
If a vertex $w \in G^0$ emits infinitely many edges, meaning $|s^{-1}(w)| = \infty$, we say that $w$ is an infinite emitter. 

A vertex is called singular if it is either an infinite emitter or a sink.
A regular vertex is a vertex that is not singular. 
\end{defi}

\begin{defi}
Let $G = (G^0,G^1,r,s)$ be a graph. The graph $C^*$-algebra of $G$, denoted by $C^*(G)$, is the universal $C^*$-algebra generated by a set of mutually orthogonal projections $\{ p_v \mid v \in G^0 \}$ and a set $\{ s_e \mid e \in G^1 \}$ of partial isometries satisfying the following conditions:
\begin{itemize}
	\item $s_e^* s_f = 0$ if $e,f \in G^1$ and $e \neq f$,
	\item $s_e^* s_e = p_{r_{G}(e)}$ for all $e \in G^1$,
	\item $s_e s_e^* \leq p_{s_{G}(e)}$ for all $e \in G^1$, and,
	\item $p_v = \sum_{e \in s_{G}^{-1}(v)} s_e s_e^*$ for all $v \in G^0$ with $0 < |s_{G}^{-1}(v)| < \infty$.
\end{itemize}
\end{defi}

We would like to discuss matrices as well as graphs so we set up some notation and give a brief description of the Smith normal form. 

\begin{nota}
Throughout this paper we will only consider matrices with entries in $\N \cup \{ 0, \infty \}$.
\end{nota}

\begin{nota}
Given a finite set $X$ and an $X \times X$ matrix $A$, let $G_A$ be the graph with $G_A^0 = X$ and $|s^{-1}(x) \cap r^{-1}(y)| = A(x,y)$ for all $x,y \in X$. 
Given a graph $G$ we let $A_G$ be the $G^0 \times G^0$ matrix with $A(u,v) = |s^{-1}(u) \cap r^{-1}(v)|$. 
\end{nota}

Every integer matrix $A$ without zero entries can be diagonalized, using row and column operations, in such a way that the diagonal entries $d_1, d_2, \ldots, d_n$ (listed from top to bottom) satisfy that $d_i$ divides $d_{i+1}$ and that $d_1$ is the greatest common divisor of all the entries in $A$. 
The diagonal matrix is called the Smith normal form of $A$, and the $d_i$ are called elementary divisors of $A$ or invariant factors of $A$. 
The term elementary divisor is used in \cite{franks_flowequivalence}, so we will also use it here. 
There is an algorithm for computing the Smith form $S$ of a matrix $A$ and since $A$ and $S$ have the same cokernel this makes for convenient computations of cokernels of matrices and therefore of $K$-groups of graph algebras. 
This also illustrates why the elementary divisors played an important role in \cite{franks_flowequivalence} and why they will feature prominently here, see Proposition \ref{standardForm}.
For details about the Smith form see \cite[Section 9.4]{rotman_algebra}

\section{The moves}

In this section we will discuss the basic moves we will use to manipulate graphs. 
Our first two moves are special cases of the slightly complicated move described in \cite[Theorem 3.1]{CrispGow}. 
One does not need the full power of this theorem to prove that these moves preserve Morita equivalence, as this is easily proved by directly writing down explicit isomorphisms, for instance by following the proof given by Crisp and Gow and noticing that there are many simplifications in  the two special cases. 

\begin{prop}[Move {\tt (S)}] \label{moveS}
Let $G = (G^0,G^1,r_G,s_G)$ be a graph, and let $u \in G^0$ be a regular vertex that is a source. 
Define a graph $E = (E^0,E^1, r_E, s_E)$ by $E^0 = G^0 \setminus \{ u \}$, $E^1 = G^1 \setminus s^{-1}(v)$, $r_E = r_G \vert_{E^{0}}$, and $s_E = s_G \vert_{E^0}$.
Then $C^*(E)$ is (isomorphic to) a full corner of $C^*(G)$.
\end{prop}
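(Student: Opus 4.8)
The plan is to exhibit an explicit full corner. Write $p = 1 - p_u$ for the projection in $M(C^*(G))$ obtained by summing the vertex projections over $E^0 = G^0 \setminus \{u\}$ (when $G^0$ is finite this is simply $\sum_{v \neq u} p_v \in C^*(G)$). I would then prove two separate statements: that $pC^*(G)p \cong C^*(E)$, and that $p$ is full, i.e. that the closed two-sided ideal generated by $p$ is all of $C^*(G)$. Together these say precisely that $C^*(E)$ is isomorphic to a full corner of $C^*(G)$.

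For the isomorphism I would set $q_v = p_v$ for $v \in E^0$ and $t_e = s_e$ for $e \in E^1$, and check that $\{q_v, t_e\}$ is a Cuntz-Krieger $E$-family inside $pC^*(G)p$. The orthogonality and partial-isometry relations are inherited verbatim from $C^*(G)$, and since no edge of $G$ has $u$ as its range (as $u$ is a source), both endpoints of every $e \in E^1$ lie in $E^0$, so indeed $t_e \in pC^*(G)p$. The one point that genuinely uses the source hypothesis is the final Cuntz-Krieger relation: because $u$ receives no edges, removing the edges in $s_G^{-1}(u)$ leaves $s_E^{-1}(v) = s_G^{-1}(v)$ for every $v \neq u$, so a vertex $v \in E^0$ is regular in $E$ exactly when it is regular in $G$, and the defining sum $\sum_{e \in s_E^{-1}(v)} t_e t_e^*$ coincides with $\sum_{e \in s_G^{-1}(v)} s_e s_e^* = p_v = q_v$. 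Universality then yields a $*$-homomorphism $\phi \colon C^*(E) \to pC^*(G)p$. For surjectivity I would observe that $p\, s_\mu s_\nu^*\, p$ vanishes unless $s(\mu) \neq u \neq s(\nu)$, and that any path starting away from $u$ avoids $u$ entirely (since $u$ receives no edges); such surviving monomials are thus words in the $E$-family, so $\phi$ has dense range.

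For injectivity I would invoke the gauge-invariant uniqueness theorem \cite{raeburn_book}: the gauge action on $C^*(G)$ fixes each $p_v$, hence restricts to $pC^*(G)p$, the map $\phi$ is gauge-equivariant, and it carries each nonzero generator $p_v$ to itself, so $\phi$ is faithful and therefore an isomorphism. Finally I would verify fullness, and this is where the regularity of $u$ enters decisively. Since $u$ is regular, the Cuntz-Krieger relation gives $p_u = \sum_{e \in s_G^{-1}(u)} s_e s_e^*$, and because $u$ is a source each such $e$ has $r(e) \in E^0$, so $s_e s_e^* = s_e p_{r(e)} s_e^*$ lies in the ideal generated by $p$. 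Hence $p_u$ lies in this ideal; combined with $p_v \leq p$ for $v \in E^0$ and $s_e = p_{s(e)} s_e$, every generator of $C^*(G)$ does, so the ideal is all of $C^*(G)$.

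The main obstacle is the surjectivity step: making rigorous that multiplying spanning monomials $s_\mu s_\nu^*$ by $p$ on both sides annihilates all ``genuine $u$-information'' and leaves only words in the $E$-family. This is really a consequence of $u$ being a source (so $u$ can occur only at the very start of a path), and once it is isolated the remaining verifications are routine; the regularity hypothesis is used only, but essentially, for fullness.
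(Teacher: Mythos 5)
Your argument is correct and is essentially the proof the paper intends: the paper establishes Move {\tt (S)} not with a written-out argument but by citing it as a special case of \cite[Theorem 3.1]{CrispGow}, remarking that in this special case one can simply write down the explicit isomorphism onto a full corner directly --- which is exactly what you do, via the corner projection $p = 1 - p_u$, the Cuntz-Krieger $E$-family $\{p_v, s_e\}$ inside $pC^*(G)p$, gauge-invariant uniqueness for injectivity, and the Cuntz-Krieger relation at $u$ for fullness. You also correctly isolate where each hypothesis enters (the source condition kills all $u$-information under compression and makes the restricted family satisfy the relations; regularity of $u$, i.e. $0 < |s_G^{-1}(u)| < \infty$, is what puts $p_u$ in the ideal generated by $p$), so there are no gaps.
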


\begin{prop}[Move {\tt (R)}] \label{moveR}
Let $G = (G^0, G^1, r_G, s_G)$ be a graph, and let $u \in G^0$ be a regular vertex such that $s_G^{-1}(u)$ and $s_G(r_G^{-1}(u))$ are one point sets. 
Let $v$ be the only vertex that emits to $u$ and let $f$ be the only edge $u$ emits. 
Define a graph $E = (E^0, E^1, r_E, s_E)$ by $E^0 = G^0 \setminus \{ u \}$,
\[
	E^1 = (G^1 \setminus (r_G^{-1}(u)) \cup \{ f \}) \cup \{ [ef] \mid e \in r_G^{-1}(u) \},
\]
and range and source maps that extend those of $G$ and satisfy $r_E([ef]) = r_G(f)$ and $s_E([ef]) = s_G(e) = v$.
If $r_G(f) \neq u$ then $C^*(E)$ is (isomorphic to) a full corner of $C^*(G)$.
\end{prop}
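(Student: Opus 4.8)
The plan is to verify Move (R) directly by writing down an explicit isomorphism from $C^*(E)$ onto a full corner of $C^*(G)$, following the strategy indicated in the introduction to this section (simplifying the Crisp--Gow construction). Write $\{p_v, s_e\}$ for the canonical generators of $C^*(G)$ and $\{q_w, t_g\}$ for those of $C^*(E)$. The key geometric feature is that the vertex $u$ is regular, receives edges only from $v$ (through the various $e \in r_G^{-1}(u)$), and emits exactly the single edge $f$ with $r_G(f) \neq u$. Intuitively we want to ``absorb'' $u$ into its neighbours: the projection $p_u$ should be removed, and each path $ef$ through $u$ should be collapsed to the new edge $[ef]$.

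First I would guess the isomorphism on generators. The natural choice is to send $q_w \mapsto p_w$ for every $w \in E^0 = G^0 \setminus \{u\}$, to send $t_g \mapsto s_g$ for every edge $g \in G^1$ that does not meet $u$ (i.e. $g \neq f$ and $r_G(g) \neq u$), and to send the new edges via $t_{[ef]} \mapsto s_e s_f$. The composite $s_e s_f$ is a partial isometry because $r_G(e) = u = s_G(f)$, so $s_e^* s_e = p_u = s_f s_f^* \cdot(\text{something})$; more precisely I expect $s_e^* s_e = p_u \geq s_f s_f^*$ to make the concatenation well-defined, and here the equality $p_u = s_f s_f^*$ holds \emph{exactly because $u$ is regular and emits only $f$} (the last Cuntz--Krieger relation at $u$ reads $p_u = s_f s_f^*$). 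This single relation is what makes the whole move work, and it is why regularity of $u$ is hypothesised.

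The main verification is that this assignment respects the four defining relations of $C^*(E)$, so that by universality it extends to a $*$-homomorphism $\varphi \colon C^*(E) \to C^*(G)$. The orthogonality and the $t_g^* t_g = q_{r_E(g)}$ relations for the collapsed edges reduce, using $p_u = s_f s_f^*$, to the identities $s_f^* s_e^* s_e s_f = s_f^* p_u s_f = s_f^* s_f = p_{r_G(f)} = p_{r_E([ef])}$, and similarly for orthogonality of distinct $s_e s_f$, $s_{e'}s_f$ one computes $s_f^* s_e^* s_{e'} s_f = 0$ when $e \neq e'$. The summation relation at the vertex $v$ is the delicate point: in $E$ the vertex $v$ emits the edges $[ef]$ (one for each $e \in r_G^{-1}(u)$) together with whatever edges $v$ already emitted in $G$ other than those going to $u$, and I must check that $\sum s_e s_f (s_e s_f)^* = \sum s_e p_u s_e^* = \sum s_e s_e^*$ over $e \in r_G^{-1}(u)$ recovers exactly the part of the Cuntz--Krieger sum at $v$ coming from edges into $u$. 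The regularity of $u$ guarantees $0 < |s_G^{-1}(u)| < \infty$ so that relation is available; one must also confirm $v$ is regular in $E$ iff it was in $G$ so that the sum is imposed on the correct side.

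Having produced $\varphi$, I would identify its image as the corner $PC^*(G)P$ for the projection $P = \sum_{w \in E^0} p_w = 1 - p_u$ (when $C^*(G)$ is unital; in general work with the multiplier projection $\sum_{w \neq u} p_w$). That $\varphi$ lands in this corner is clear since all generators in the image are of the form $p_w, s_g$ or $s_e s_f$ with neither endpoint equal to $u$ after multiplication. Injectivity of $\varphi$ I would get from the gauge-invariant uniqueness theorem: $\varphi$ is equivariant for the gauge actions provided I assign the right degrees (degree $1$ to $t_{[ef]}$ matching $\deg(s_e s_f) = 2$ forces a reparametrisation, so I would instead use the Cuntz--Krieger uniqueness theorem or check that $\varphi(q_w) = p_w \neq 0$ and invoke the version of uniqueness that only needs nonvanishing of vertex projections together with a condition ensuring no vanishing on the fixed-point algebra). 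Finally, fullness of the corner $P C^*(G) P$ amounts to showing the ideal generated by $\{p_w : w \neq u\}$ is all of $C^*(G)$, i.e. that $p_u$ lies in that ideal; since $p_u = s_f s_f^*$ and $r_G(f) \neq u$ gives $s_f^* s_f = p_{r_G(f)}$ with $r_G(f) \in E^0$, we get $p_u = s_f p_{r_G(f)} s_f^* \in \overline{C^*(G) P C^*(G)}$, establishing fullness. The hypothesis $r_G(f) \neq u$ is used precisely here (and to make $[ef]$ a genuine edge into $E^0$); I expect the gauge/uniqueness bookkeeping for injectivity to be the most delicate part, since the degree-shift on the collapsed edges means the naive gauge-equivariance fails and one must argue injectivity more carefully.
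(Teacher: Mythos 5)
Your plan follows exactly the route the paper itself gestures at: the paper gives no detailed proof of Proposition~\ref{moveR}, noting only that it is a special case of \cite[Theorem 3.1]{CrispGow} and that one can instead write down an explicit isomorphism directly. Your generator assignment $q_w \mapsto p_w$, $t_g \mapsto s_g$, $t_{[ef]} \mapsto s_e s_f$ is the right one, and you correctly isolate the engine of the whole argument: since $u$ is regular with $s_G^{-1}(u) = \{f\}$, the fourth Cuntz--Krieger relation gives $p_u = s_f s_f^*$, which makes the relation checks at $v$ go through ($s_e s_f s_f^* s_e^* = s_e p_u s_e^* = s_e s_e^*$) and which you also use, correctly, to prove fullness via $p_u = s_f p_{r_G(f)} s_f^*$ --- indeed the place where $r_G(f) \neq u$ is needed.

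Two steps, however, are left in a state where they would not go through as written. First, injectivity: your fallback, the Cuntz--Krieger uniqueness theorem, fails in general because it requires Condition (L) on $E$, which is not a hypothesis here. Concretely, let $G$ be the two-cycle $v \to u \to v$; then $E$ is one vertex with one loop, $C^*(E) \cong C(\mathbb{T})$, and CK-uniqueness is unavailable. The ``reparametrisation'' you gesture at must actually be performed, and it is easy: by universality define an action $\beta$ of $\mathbb{T}$ on $C^*(G)$ fixing every $p_w$ and every $s_e$ with $e \in r_G^{-1}(u)$, and scaling $s_g \mapsto z s_g$ for all other edges (including $f$). Then $s_e s_f$ has total degree $1$, $\varphi$ intertwines the canonical gauge action on $C^*(E)$ with $\beta$, and since $\varphi(q_w) = p_w \neq 0$ the gauge-invariant uniqueness theorem for arbitrary graphs (see \cite{bhrs_idealstruct}) gives injectivity. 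Second, you verify only the inclusion $\varphi(C^*(E)) \subseteq P C^*(G) P$; the reverse inclusion is a genuine step, not bookkeeping. For a spanning element $s_\alpha s_\beta^*$ with $s(\alpha), s(\beta) \neq u$, any passage of $\alpha$ or $\beta$ through $u$ occurs in a segment $ef$, which is $\varphi(t_{[ef]})$; and if $\alpha = \alpha' e$ and $\beta = \beta' e'$ end at $u$ (they end at the same vertex, so both do or neither does), then $s_e s_{e'}^* = s_e s_f s_f^* s_{e'}^* = (s_e s_f)(s_{e'} s_f)^*$, again via $p_u = s_f s_f^*$, so every such element lies in the image. With these two repairs your outline becomes a complete proof; without the first it is a genuine gap, since the proposition is asserted for arbitrary graphs, loops without exits included.
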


The following figure illustrates an application of move {\tt (R)}. 
First we use the move to remove the vertex $\star$, and then we apply it to remove the vertex $\circ$. 
Here we use $\rightsquigarrow$ to denote an application of move {\tt (R)}. 

\[
	\xymatrix{ 
		\circ \ar@/^0.5em/[r] & \bullet \ar@/^0.5em/[l] \ar@/^0.5em/[r] \ar@/_0.5em/[r] & \star \ar[r] & \bullet
	} 
	\quad
	\rightsquigarrow
	\quad 
	\xymatrix{ 
		\circ \ar@/^0.5em/[r] & \bullet \ar@/^0.5em/[l] \ar@/^0.5em/[r] \ar@/_0.5em/[r] & \bullet
	} 
	\quad
	\rightsquigarrow
	\quad
	\quad
	\xymatrix{ 
		\bullet \ar@(dl,ul) \ar@/^0.5em/[r] \ar@/_0.5em/[r] & \bullet
	} 
\]
\linebreak 

Aside from these moves, we will also use the (proper) in- and out-splittings at one vertex from \cite{batesFlow}, which we record here for the convenience of the reader.
The following are special cases of \cite[Theorem 3.5 and Corollary 5.4]{batesFlow}.

\begin{thm}[Move {\tt (O)}] \label{outSplit}
Let $G = (G^0, G^1, r,s)$ be a graph, and let $v \in G^0$.
Suppose that $v$ is not a sink. 
Partition $s^{-1}(v)$ into a finite number, $n$ say, of sets $\{ \mathcal{E}_1, \mathcal{E}_2, \ldots, \mathcal{E}_n \}$.
Define a graph $G_{os}$ by:
\begin{align*}
 	G_{os}^0 & = (G^0 \setminus \{ v \}) \cup \{ v^1, v^2, \ldots, v^n \}, \\
	G_{os}^1 & = \left( G^1 \setminus r^{-1}(v) \right) \cup \{ e^1, e^2, \ldots, e^n \mid e \in G^1, r(e) = v \}.
\end{align*}
For $e \notin r^{-1}(v)$ we let $r_{os}(e) = r(e)$, for $e \in r^{-1}(v)$ we let $r_{os}(e^i) = v^i$, $i = 1, 2, \ldots,n$.
For $e \notin s^{-1}(v)$ we let $s_{os}(v) = s(e)$, for $e \in s^{-1}(v) \setminus r^{-1}(v)$ we let $s_{os}(e) = v^i$ if $e \in \mathcal{E}_i$, and for $e \in s^{-1}(v) \cap r^{-1}(v)$ we let $s_{os}(e^j) = v^i$ if $e \in \mathcal{E}$, for $i,j= 1,2, \ldots, n$.

If at most one of the $\mathcal{E}_i$ is infinite then $C^*(G) \cong C^*(G_{os})$.
\end{thm}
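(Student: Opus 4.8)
The plan is to reduce the claim to the general out-splitting theorem \cite[Theorem 3.5 and Corollary 5.4]{batesFlow}, while verifying that the combinatorial data we have specified really is a legitimate instance of an out-splitting, and then to invoke the isomorphism provided there. The statement we must prove is an isomorphism $C^*(G) \cong C^*(G_{os})$, not merely a Morita equivalence, so the right move is to exhibit the partition $\{\mathcal{E}_1,\ldots,\mathcal{E}_n\}$ of $s^{-1}(v)$ as the defining partition of an out-split at the single vertex $v$, note that all other vertices are left untouched, and check that the hypothesis ``at most one $\mathcal{E}_i$ is infinite'' is exactly the hypothesis under which \cite{batesFlow} yields a genuine $*$-isomorphism (as opposed to a stabilized version). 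The assumption that $v$ is not a sink is what guarantees $s^{-1}(v)\neq\emptyset$, so that the partition is into nonempty pieces and each $v^i$ genuinely emits, keeping $G_{os}$ a well-defined graph.

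First I would unwind the construction of $G_{os}$ and confirm it agrees verbatim with the out-split graph of \cite{batesFlow}: the vertex $v$ is replaced by the copies $v^1,\ldots,v^n$; every edge $e$ with $r(e)=v$ is replaced by copies $e^1,\ldots,e^n$ with $r_{os}(e^i)=v^i$; the source of an edge in $\mathcal{E}_i$ is assigned to $v^i$; and edges not meeting $v$ are unchanged. The one point requiring care is the treatment of edges $e$ with $s(e)=r(e)=v$, i.e. loops at $v$: such an edge is both split into copies $e^j$ (because $r(e)=v$) and has its source redistributed according to the partition class of $e$, and I would confirm the formula $s_{os}(e^j)=v^i$ for $e\in\mathcal{E}_i$ matches the bookkeeping in \cite{batesFlow} so that no edge is lost or double-counted.

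Next I would produce the isomorphism explicitly, following \cite{batesFlow}. The natural recipe is to define a Cuntz–Krieger $G_{os}$-family inside $C^*(G)$: set $q_{v^i}=\sum_{e\in\mathcal{E}_i}s_e s_e^*$ for each $i$ (these are orthogonal projections summing to $p_v$ when $v$ is regular), leave $q_w=p_w$ for $w\neq v$, and define partial isometries $t_{e^i}$ and $t_e$ so that the range projection of each split edge lands in the correct $q_{v^i}$. One then checks the four Cuntz–Krieger relations for this family and uses the universal property of $C^*(G_{os})$ to get a $*$-homomorphism $C^*(G_{os})\to C^*(G)$, producing an inverse in the same spirit. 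The finiteness hypothesis enters precisely in verifying the summation relation at the vertices $v^i$: if at most one $\mathcal{E}_i$ is infinite, then each regular $v^i$ has $0<|s_{os}^{-1}(v^i)|<\infty$ and the relation $q_{v^i}=\sum s_e s_e^*$ is forced, whereas an infinite $\mathcal{E}_i$ produces an infinite emitter $v^i$ for which no summation relation is imposed.

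The main obstacle I anticipate is bookkeeping rather than depth: correctly handling edges that are simultaneously split at their range and reassigned at their source (the loops at $v$), and verifying that the projections $q_{v^i}$ satisfy the summation relation exactly when the corresponding $v^i$ is regular. Since this is a direct specialization of \cite[Theorem 3.5 and Corollary 5.4]{batesFlow}, I would keep the argument short by citing that result for the verification of the Cuntz–Krieger relations and the construction of the mutually inverse $*$-homomorphisms, and devote the written detail only to checking that our combinatorial hypotheses ($v$ not a sink; at most one $\mathcal{E}_i$ infinite) place us in the case where their theorem delivers an honest isomorphism.
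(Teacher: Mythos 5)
Your proposal follows the paper exactly: the paper offers no independent proof of this statement, recording it verbatim as a special case of \cite[Theorem 3.5 and Corollary 5.4]{batesFlow}, which is precisely the reduction you carry out. One small caution in your sketch: when $v$ is an infinite emitter, the formula $q_{v^i}=\sum_{e\in\mathcal{E}_i}s_e s_e^*$ does not converge for the (then necessarily unique) infinite class $\mathcal{E}_i$, so that projection must instead be defined as $p_v$ minus the finitely many finite sums --- a detail handled in the arbitrary-graph case of \cite{batesFlow}, which you in any event cite for the verification of the Cuntz--Krieger relations.
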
 

\begin{rema}
If we apply move {\tt (O)} to $G$ thus yielding a graph $E$, we say that $E$ is an out-split of $G$. 
We will also say that $G$ as an out-amalgamation of $E$, and refer to using move {\tt (O)} ``backwards'' as out-amalgamating $E$ into $G$. 
\end{rema}

\begin{thm}[Move {\tt (I)}] \label{inSplit}
Let $G = (G^0, G^1, r,s)$ be a graph, and let $v \in G^0$.
Suppose that $v$ is not a source. 
Partition $r^{-1}(v)$ into a finite number, $n$ say, of sets $\{ \mathcal{E}_1, \mathcal{E}_2, \ldots, \mathcal{E}_n \}$.
Define a graph $G_{is}$ by:
\begin{align*}
 	G_{is}^0 & = (G^0 \setminus \{ v \}) \cup \{ v^1, v^2, \ldots, v^n \}, \\
	G_{is}^1 & = \left( G^1 \setminus s^{-1}(v) \right) \cup \{ e^1, e^2, \ldots, e^n \mid e \in G^1, s(e) = v \}.
\end{align*}
For $e \notin r_{G}^{-1}(v)$ we let $r_{is}(e) = r(e)$, for $e \in r^{-1}(v) \setminus s^{-1}(v)$ we let $r_{is}(e) = v^i$ if $e \in \mathcal{E}_i$, and if $ e \in r^{-1}(v) \cap s^{-1}(v)$ then $r_{is}(e^j) = v^i$ for $e \in \mathcal{E}_i$,  for $i,j=1,2,\ldots, n$.
For $e \notin s^{-1}(v)$ we let $s_{is}(v) = s(e)$, for $e \in s^{-1}(v)$ we let $s_{is}(e^i) = v^i$ for $i= 1,2, \ldots, n$.

If $v$ is a regular vertex then $C^*(G)$ is stably isomorphic to $C^*(G_{is})$.
\end{thm}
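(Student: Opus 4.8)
The plan is to prove the stable isomorphism directly, by constructing an explicit Cuntz--Krieger $G_{is}$-family inside $C^*(G)\otimes\mathcal{K}$ and then recognising its image as a full corner. The guiding observation is that in-splitting only alters $G$ on the \emph{receiving} side, redistributing $r^{-1}(v)$ among the new vertices $v^1,\dots,v^n$, while \emph{forcing} every edge emitted by $v$ to be duplicated into $n$ copies. Consequently the natural identification $p_v\leftrightarrow\sum_i q_{v^i}$ cannot be implemented inside $C^*(G)$ itself: a single emitted edge would have to give rise to $n$ orthogonal copies, and there is no room for this among the $s_e$. The remedy is to tensor with a copy of $M_n\subseteq\mathcal{K}$ and use the matrix units $\{e_{ij}\}$ to store the $n$ copies of $v$.

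Concretely, writing $i(e)$ for the index with $e\in\mathcal{E}_{i(e)}$ when $r(e)=v$, I would set $q_w=p_w\otimes e_{11}$ for $w\neq v$, set $q_{v^i}=p_v\otimes e_{ii}$, and define the edge isometries by
\[
 t_e=s_e\otimes e_{11}\quad(r(e)\neq v,\ s(e)\neq v),\qquad
 t_e=s_e\otimes e_{1,i(e)}\quad(r(e)=v,\ s(e)\neq v),
\]
\[
 t_{e^j}=s_e\otimes e_{j1}\quad(r(e)\neq v,\ s(e)=v),\qquad
 t_{e^j}=s_e\otimes e_{j,i(e)}\quad(r(e)=s(e)=v).
\]
A routine check shows $\{q_w,t_f\}$ satisfies the first three Cuntz--Krieger relations, the matrix-unit orthogonality handling the copies coming from a single $s_e$ and the relation $s_e^*s_f=0$ handling the rest. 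The one relation deserving attention is $q_{v^i}=\sum_{f\in s_{is}^{-1}(v^i)}t_ft_f^*$: the edges emitted by $v^i$ are exactly the copies $e^i$ of the edges $e$ emitted by $v$, and the sum collapses to $\bigl(\sum_{e\in s^{-1}(v)}s_es_e^*\bigr)\otimes e_{ii}$, which equals $p_v\otimes e_{ii}=q_{v^i}$ \emph{precisely because $v$ is regular}, so that $\sum_{e\in s^{-1}(v)}s_es_e^*=p_v$. This is the only place the regularity hypothesis enters, and it is exactly what the theorem isolates.

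By the universal property this family induces a $*$-homomorphism $\pi\colon C^*(G_{is})\to C^*(G)\otimes\mathcal{K}$. To see that $\pi$ is injective I would invoke the gauge-invariant uniqueness theorem: each $q_w\neq 0$, and if $\gamma$ denotes the gauge action on $C^*(G)$ then $\gamma_z\otimes\operatorname{id}$ fixes every $q_w$ and scales every $t_f$ by $z$ (each $t_f$ involves a single $s_e$), so $\pi$ is gauge-equivariant and hence faithful.

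It remains to identify the image $M:=\pi(C^*(G_{is}))$. Put $Q=\sum_{w\neq v}q_w+\sum_i q_{v^i}$, a projection in (the multiplier algebra of) $C^*(G)\otimes\mathcal{K}$; clearly $M\subseteq Q(C^*(G)\otimes\mathcal{K})Q$. Fullness is immediate, since $M$ contains $p_w\otimes e_{11}$ for every $w$ and the ideal generated by these is all of $C^*(G)\otimes\mathcal{K}$. The substantive point, which I expect to be the main obstacle, is surjectivity onto the corner, i.e.\ that $M$ exhausts $Q(C^*(G)\otimes\mathcal{K})Q$. The key step is that $M$ contains the connecting matrix units, $\sum_{e\in s^{-1}(v)}t_{e^i}t_{e^j}^*=p_v\otimes e_{ij}$, again by regularity of $v$; together with the elements $s_e\otimes e_{11}$ (available by choosing the first copy of each edge) these recover a generating set of the corner, giving $M=Q(C^*(G)\otimes\mathcal{K})Q$. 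Since a full corner of a separable $C^*$-algebra is stably isomorphic to it (Brown--Green--Rieffel), and since $C^*(G)\otimes\mathcal{K}$ is stably isomorphic to $C^*(G)$, we conclude that $C^*(G)$ and $C^*(G_{is})$ are stably isomorphic. I would finally remark that the unavoidable passage through $\mathcal{K}$ is exactly why in-splitting yields only stable isomorphism, in contrast with the genuine isomorphism produced by the out-split of Theorem~\ref{outSplit}.
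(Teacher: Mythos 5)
Your proof is correct, and I checked the computations that carry it: the only delicate Cuntz--Krieger relation is $q_{v^i}=\sum_{f\in s_{is}^{-1}(v^i)}t_ft_f^*=\bigl(\sum_{e\in s^{-1}(v)}s_es_e^*\bigr)\otimes e_{ii}$, which uses regularity of $v$ exactly where you say it does; gauge-equivariance of $\pi$ under $\gamma_z\otimes\operatorname{id}$ together with $q_w\neq 0$ does give injectivity by the gauge-invariant uniqueness theorem for arbitrary graphs; and the corner identification works because $M$ contains $s_e\otimes e_{11}$ for \emph{every} edge (when $r(e)=v$ one strips the off-diagonal matrix unit using the connecting units $p_v\otimes e_{ij}=\sum_{e\in s^{-1}(v)}t_{e^i}t_{e^j}^*$, again by regularity), hence contains $C^*(G)\otimes e_{11}$, and multiplying on either side by the connecting units recovers every compression $Q(x\otimes e_{ij})Q$. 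The route is, however, genuinely different from the paper's, which offers no proof at all: Theorem~\ref{inSplit} is imported as a special case of \cite[Theorem 3.5 and Corollary 5.4]{batesFlow}, where the Morita equivalence is implemented in the opposite direction, by exhibiting $C^*(G)$ as a full corner \emph{inside} the in-split algebra $C^*(G_{is})$ (compressing by the projection built from the vertices $v^1$) and then invoking Brown's theorem. Your version instead realizes $C^*(G_{is})$ as a full corner of the stabilization $C^*(G)\otimes\mathcal{K}$; what this buys is a self-contained argument in which the stabilization, and hence the exact meaning of ``stably isomorphic,'' is explicit, at the cost of the tensor bookkeeping. Two small caveats: Brown's stable isomorphism theorem requires $\sigma$-unitality, so you should assume the graph countable (harmless here, and implicit in \cite{batesFlow}, and automatic in the unital setting the paper cares about); and your closing remark that the passage through $\mathcal{K}$ is ``unavoidable'' is heuristic rather than proved --- though it is a correct instinct, since in-splitting replaces the class of the unit $[1]$ by $[1]+(n-1)[p_v]$ in $K_0$, so genuine isomorphism can fail.
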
 

We will also need the Cuntz splice. 

\begin{defi}[{\tt (C)}] \label{cuntzSplice}
Let $G = (G^0, G^1, r_G, s_G)$ be a graph and let $v \in G^0$ be a regular vertex that supports at least two simple loops (recall that a loop $\alpha$ is simple if $r(\alpha_i) \neq r(\alpha)$ for all $i < |\alpha|$). 
Define a graph $E = (E^0, E^1, r_E, s_E)$ by $E^0 = G^0 \cup \{ u_1, u_2 \}$, $E^1 = G^1 \cup \{ e_1, e_2, f_1, f_2, h_1, h_2\}$, $r_E$ and $s_E$ extend $r_G$ and $s_G$ respectively, and satisfy 
\[
	s_E(e_1) = v, \quad s_E(e_2) = u_1, \quad s_E(f_i) = u_1, \quad s_E(h_i) = u_2,
\]
and 
\[
	r_E(e_1) = u_1, \quad r_E(e_2) = v, \quad r_E(f_i) = u_i, \quad r_E(h_i) = u_i.
\]
We say that $E$ arises by applying move {\tt (C)} to $G$ at $v$. 
\end{defi}  

The following is an illustration of move {\tt (C)} at vertex $\star$ 

\[
	\xymatrix{ 
		\bullet \ar@(dl,ul) \ar@/^0.5em/[r] & \star \ar@/^0.5em/[l]
	} 
	\quad
	\rightsquigarrow
	\quad
	\quad
	\xymatrix{ 
		\bullet \ar@(dl,ul) \ar@/^0.5em/[r] & \star \ar@/^0.5em/[l] \ar@/^0.5em/[r]^{e_1} & u_1 \ar@/^0.5em/[l]^{e_2} \ar@/^0.5em/[r]^{f_2} \ar@(dl,dr)_{f_1} & u_2 \ar@/^0.5em/[l]^{h_1} \ar@(ur,dr)^{h_2}
	}
\]

\begin{rema} \label{cuntzPreserve}
The Cuntz splice differs from the other moves in that we cannot show that it preserves stable isomorphism of the graph algebras. 
We can however show that it does not change the $K$-theory of the algebra.
This is easily seen once we recall that the $K$-groups are computed as the kernel and cokernel of $I - A_G$, see \cite{ddmt_kthygraph}. 
Since if $E$ arises from $G$ by an application of move {\tt (C)}, then $I - A_E$ can be transformed into 
\[
\left( \begin{array}{cc|cccc} 
 								1 & 0 & 0 & 0 & 0 & \cdots \\
 								0 & 1 & 0 & 0 & 0 & \cdots \\
 								\hline 
 								0 & 0 & \multicolumn{4}{c}{}\\
 								0 & 0 & \multicolumn{4}{c}{I - A_G} \\ 
 								\vdots & \vdots & \multicolumn{4}{c}{}
 						  \end{array} \right).
\]
using row an column operations, so neither the kernel nor the cokernel changes. 
Similarly one can see that neither the ideal structure nor the order of $K_0$ is changed, the latter uses that the Cuntz splice is done at a vertex that supports two simple loops. 
\end{rema}

\section{Some equivalence relations}

We will define four equivalence relations on graphs. 
First our two most important ones.

\begin{defi}
We define $\Meq$ to be the smallest equivalence relation on graphs with finitely many vertices such that $G \Meq E$ if $G$ differs, up to isomorphism of graphs, from $E$ by an application of one of the moves {\tt (S)}, {\tt (I)}, {\tt (O)}, or {\tt (R)}. 

If $G \Meq E$, we say that $G$ is move-equivalent to $E$.
\end{defi}

\begin{defi}
Given two graph $G$ and $E$, we say that $G$ and $E$ are $C^*$-equivalent, written $G \Ceq E$, if $C^*(G) \otimes \mathcal{K} \cong C^*(E) \otimes \mathcal{K}$. 
\end{defi}

We have two more equivalence relations on graphs.

\begin{defi}
Given two graph $G$ and $E$, we say that $G$ and $E$ are K-equivalent, written $G \Keq E$, if $FK(C^*(G)) \cong FK(C^*(E))$, where $FK(-)$ denotes the filtered $K$-theory. See \cite{err_malaysian} for a definition of $FK$.
\end{defi}

\begin{rema}
Suppose that $C^*(G)$ is simple (this is the case we are mainly interested in).
Then
\[
	FK(C^*(G)) = (K_0(C^*(G)), K_0^+(C^*(G)), K_1(C^*(G))).
\] 
So $G \Keq E$ means that $K_0(C^*(G))$ and $K_0(C^*(E))$ are isomorphic as ordered groups, and that $K_1(C^*(G))$ and $K_1(C^*(E))$ are isomorphic as groups. 
\end{rema}

\begin{defi}
We define $\MCeq$ to be the smallest equivalence relation on graphs such that 
\begin{enumerate}
	\item $G \MCeq E$ if $G \Meq E$, and 
	\item $G \MCeq E$ if E arises by using move {\tt (C)} on $G$.  
\end{enumerate}
\end{defi}

\begin{rema}
We have 
\[
	G \Meq E \implies G \Ceq E \implies G \Keq E,
\] 
and, by Remark \ref{cuntzPreserve},
\[
	G \MCeq E \implies G \Keq E. 
\]
If $C^*(G)$ and $C^*(E)$ are purely infinite simple then by the Kirchberg-Phillips theorem (for instance \cite[Theorem 4.2.4]{phillips})
\[
	G \Keq E \implies G \Ceq E.
\]
\end{rema}

We also define the corresponding equivalence relations on matrices. 

\begin{defi}
Given two square matrices $A$ and $B$, we say that $A$ and $B$ are $C^*$-equivalent, written $A \Ceq B$, if $G_A \Ceq G_B$.
Likewise $A \Meq B$ if $G_A \Meq G_B$, $A \Keq B$ if $G_A \Keq G_B$, and $A \MCeq B$ if $G_A \MCeq G_B$. 
\end{defi}

Using the notation introduced in this section we can state our main result. 

\begin{thm} \label{mainThm}
Let $G,E$ be graphs with simple unital algebras. 
If $G$ has at least one singularity then 
\[
	G \Keq E \iff G \Meq E \iff G \Ceq E.
\]
If $G$ has no singularities then 
\[
	G \Keq E \iff G \MCeq E \iff G \Ceq E.
\]
\end{thm}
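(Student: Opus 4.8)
The plan is to reduce the whole statement to a single hard implication in each case and then attack that by a normal-form argument. First I would collect the implications that are already free: by the remarks preceding the theorem we have $G \Meq E \Rightarrow G \Ceq E \Rightarrow G \Keq E$ and $G \MCeq E \Rightarrow G \Keq E$ (the latter via Remark~\ref{cuntzPreserve}). Hence for the singularity case it suffices to prove the one reverse implication $G \Keq E \Rightarrow G \Meq E$: this closes the loop $\Meq \Rightarrow \Ceq \Rightarrow \Keq \Rightarrow \Meq$ and forces all three relations to coincide, with $\Ceq$ squeezed in between, so no classification theorem is even needed here. In the no-singularity case I would first note that $C^*(G)$ is purely infinite simple (a finite graph with no sinks contains a loop, and a simple graph algebra with a loop is purely infinite), and that $G \Keq E$ forces the same ordered $K_0$, hence $C^*(E)$ is purely infinite simple as well; Kirchberg--Phillips then gives $G \Keq E \Rightarrow G \Ceq E$, which with the free implications yields $\Keq \iff \Ceq$. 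Everything therefore comes down to the two implications $\Keq \Rightarrow \Meq$ (singularity present) and $\Keq \Rightarrow \MCeq$ (no singularity).

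To prove these I would attach to each admissible value of the invariant $FK$ a distinguished \emph{standard form} graph, built from the elementary divisors of $I - A_G$ as in Proposition~\ref{standardForm}, and show first that every graph with simple unital algebra is move-equivalent to its standard form. I would proceed in stages: use move {\tt (S)} to discard regular sources; then, following the infinite-emitter techniques of \cite{ers_amp}, use out-splittings {\tt (O)} to drive any infinite emitters into a canonical configuration; and finally use in- and out-splittings {\tt (I)}, {\tt (O)} together with reductions {\tt (R)} to realise, one at a time, the integer row and column operations that put $I - A_G$ into Smith normal form. Reading off $K_0 = \coker(I - A_G)$ and $K_1 = \ker(I - A_G)$, the resulting graph depends only on the elementary divisors, the kernel rank, the order on $K_0$ (in the AF case), the type of singularity present, and --- a priori --- the sign of $\det(I - A_G)$.

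In the second half I match standard forms: two graphs with isomorphic $FK$ yield standard forms agreeing in every recorded datum except possibly the sign of the determinant, and I must argue this last discrepancy is harmless. When $G$ has no singularity this is exactly where move {\tt (C)} enters, as the matrix $I - A_G$ is then a genuine integer square matrix and its determinant sign is the one remaining invariant not seen by $FK$; as observed in the introduction a Cuntz splice preserves $\coker(I - A_G)$ (hence $FK$, by Remark~\ref{cuntzPreserve}) while reversing the sign of the determinant, so applying it to one of the two standard forms equalises the signs, after which the graphs are move-equivalent. Tracking the splice, this yields $G \MCeq E$. When $G$ has a singularity the sign is not an obstruction at all: in the AF case $K_1 = 0$ and the ordered group $K_0$ is a complete invariant with no determinant attached, while in the presence of an infinite emitter the relevant matrix carries an $\infty$ and the sign can be absorbed using moves at that emitter. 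This is precisely the asymmetry flagged before the theorem, namely that {\tt (C)} is needed exactly when there are no infinite emitters.

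I expect the reachability of the standard form to be the main obstacle: realising each elementary row/column operation by an explicit, legal sequence of splittings and reductions while keeping the algebra simple and unital throughout, and in particular the bookkeeping required to corral infinite emitters into canonical position, is where the real work lies. By contrast the determinant-sign step is short once the standard form is available, resting entirely on the cokernel-preserving, sign-reversing behaviour of the Cuntz splice already recorded in Remark~\ref{cuntzPreserve}.
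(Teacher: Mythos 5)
Your overall architecture is the paper's own: reduce everything to the two implications $G \Keq E \Rightarrow G \Meq E$ (singular case) and $G \Keq E \Rightarrow G \MCeq E$ (regular case) using the free implications and Kirchberg--Phillips, then prove these by driving both adjacency matrices to a Franks-style standard form determined by the elementary divisors, realising row and column operations by splittings and reductions, and using the Cuntz splice only to fix the determinant sign when no infinite emitter is available. Your observation that an infinite emitter absorbs the sign is exactly the paper's mechanism: Lemma~\ref{adjoinRow} lets one adjoin nearly arbitrary rows at an infinite emitter, which is how Proposition~\ref{aColumnOfOnes} forces the determinant of the regular corner to be zero, eliminating the sign as an invariant.

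There is, however, one genuine gap: the AF subcase of the singular branch. Your normal-form engine cannot run there, because the matrix moves (Lemmas~\ref{rowMove} and~\ref{columnMove}) require no zeros on the diagonal --- their proofs use a loop of length one at the vertex being split --- and the preparatory Proposition~\ref{fruit} assumes $C^*(G)$ purely infinite simple; an AF graph algebra comes from a graph with no loops at all, so none of this applies. Your fallback remark that in the AF case the ordered $K_0$-group ``is a complete invariant with no determinant attached'' only yields stable isomorphism, i.e.\ $G \Ceq E$, which is circular for the implication $G \Keq E \Rightarrow G \Meq E$ that you need. The paper closes this case by a direct argument: simplicity forces the graph to have exactly one sink and no infinite emitters, every source is regular (saturations add no singular vertices), and repeated applications of move {\tt (S)} collapse both graphs to the one-vertex graph, giving $G \Meq \bullet \Meq E$. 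This fix is short, but it is a separate argument you must supply, not an instance of your standard-form machinery.
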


We will prove the theorem in section~\ref{sec:classification}.

\begin{rema}
The case where the algebras are purely infinite and the graphs have no infinite emitters is essentially Franks' result \cite[Theorem 3.3]{franks_flowequivalence} combined with R{\o}rdam's result \cite[Theorem 6.5]{rordam_classCuntzKriger}. 
\end{rema}

\begin{rema}
Suppose that $G$ and $E$ are graphs with simple unital algebras, that $G$ has atleast one singularity, and that $G \Keq E$.
By Theorem \ref{mainThm} we then have $G \Meq E$.
Since the maps showing that the moves {\tt (S)}, {\tt (R)}, {\tt (I)} and {\tt (O)} preserve stable isomorphism can be described explicitly, one could chase through the moves used to show $G \Meq E$ and thereby obtain a chain of concrete isomorphism of full corners showing that $C^*(G)$ is stably isomorphic to $C^*(E)$.  
However, this is probably impractical in most cases. 
\end{rema}

\section{Derived moves}

We will now describe two very usable moves that are in $\Meq$ but not on our list, one involving infinite emitters and one involving regular vertices.
We chose not to include them in the initial list of allowed moves to get an as short and simple list as possible. 
Though we will use these moves to study graphs with simple unital algebras, the use of the moves require no such restriction. 
First we deal with regular vertices. 

\subsection{Collapse}

This move brings us a lot closer to the full power of \cite[Theorem 3.1]{CrispGow}. 
We will prove it in two steps. 
First we prove that we can handle the case where the vertex we collapse receives edges from more than one vertex. 

\begin{lem} \label{easyCollapse}
Let $G = (G^0, G^1, r_G, s_G)$ be a graph with finitely many vertices, and let $v \in G^0$ be a regular vertex that is not a source and which emits precisely one edge, $f_0$ say. 
Define a graph $E = (E^0, E^1, r_E, s_E)$ by $E^0 = G^0 \setminus \{ v \}$,
\[
	E^1 = \left( G^1 \setminus (r^{-1}(v) \cup s^{-1}(v)) \right) \bigcup \left( \{ [ef_0] \mid e \in r^{-1}(v) \} \right),
\]
the range and source maps extend those of $G$ and satisfy $r_E([ef_0]) = r_G(f_0)$ and $s_E([ef]) = s_G(e)$. 
If $f_0$ is not a loop of length one then $G \Meq E$.
\end{lem}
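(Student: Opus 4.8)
The plan is to realize the collapse as an in-split followed by several applications of move {\tt (R)}. The only obstacle to applying move {\tt (R)} to $v$ directly is that Proposition~\ref{moveR} requires $v$ to receive edges from a single vertex, i.e. $s_G(r_G^{-1}(v))$ must be a one-point set, whereas here $v$ may receive edges from many vertices. I would remove this obstruction by first splitting $v$ into copies, one for each vertex emitting to it, so that each copy inherits a single source.

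Concretely, let $w_1, \dots, w_n$ be the (finitely many, since $G^0$ is finite) distinct vertices that emit an edge to $v$, and set $\mathcal{E}_i = \{ e \in r_G^{-1}(v) : s_G(e) = w_i \}$. Since $v$ is not a source, this is a partition of $r_G^{-1}(v)$ into $n \geq 1$ nonempty parts, so Theorem~\ref{inSplit} applies; note that some $\mathcal{E}_i$ may be infinite if $w_i$ is an infinite emitter, which move {\tt (I)} permits, and that $v$ is regular so the in-split lies in $\Meq$. In-splitting $v$ along this partition produces copies $v^1, \dots, v^n$, where $v^i$ receives exactly the edges of $\mathcal{E}_i$ and emits a single edge $f_0^i$. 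The hypothesis that $f_0$ is not a loop of length one enters precisely here: it guarantees $f_0 \notin r_G^{-1}(v)$, so $f_0$ is not duplicated on its range side by the in-split and $r(f_0^i) = r_G(f_0) \neq v$ lands on a vertex different from every $v^j$.

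Now each $v^i$ is a regular vertex emitting the single edge $f_0^i$ and receiving edges only from $w_i$, so $s^{-1}(v^i)$ and $s(r^{-1}(v^i))$ are one-point sets and $r(f_0^i) \neq v^i$; thus Proposition~\ref{moveR} applies to each $v^i$. Because distinct $v^i$ do not emit to one another (they all emit only to $r_G(f_0) \neq v$), removing one does not disturb the hypotheses for the others, and I can apply move {\tt (R)} successively at $v^1, \dots, v^n$. Each such step replaces every edge $e \in \mathcal{E}_i$ by a composite edge $[e f_0^i]$ from $w_i = s_G(e)$ to $r_G(f_0)$ and deletes $v^i$ together with $f_0^i$.

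The remaining step is bookkeeping: identifying $[e f_0^i]$ with $[e f_0]$ exhibits the resulting graph as isomorphic to $E$, yielding $G \Meq G_{is} \Meq E$. I expect the main point requiring care to be the verification that the hypotheses of move {\tt (R)} genuinely hold for every $v^i$ — in particular that having a single \emph{source} vertex, rather than finitely many in-edges, is what matters, so that the argument survives the presence of infinite emitters into $v$ — together with checking that the edges produced by the successive reductions match those defining $E^1$.
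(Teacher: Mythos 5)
Your proof is correct and is essentially the paper's own argument: in-split $v$ according to the partition of $r^{-1}(v)$ by source vertex (move {\tt (I)}, legitimate since $v$ is regular, with possibly infinite partition sets allowed), producing copies $v^1,\ldots,v^n$ that each receive from a single vertex and emit one edge, and then remove each $v^i$ by move {\tt (R)}. The additional verifications you flag --- that the non-loop hypothesis on $f_0$ forces $r(f_0^i)=r_G(f_0)\neq v^j$ so that {\tt (R)} applies and successive reductions do not disturb one another --- are precisely the details the paper leaves implicit, and your uniform treatment of the case $n=1$ via a trivial in-split is an inessential variation on the paper's handling of that case directly by move {\tt (R)}.
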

\begin{proof}
If $v$ receives from only one vertex then this is move {\tt (R)}. 
So let us assume that $v$ receives from the vertices $\{ u_1, u_2, \ldots, u_n \}$, where $2 \leq n < \infty$. 
Define $\E_i = r^{-1}(v) \cap s^{-1}(u_i)$ for $i = 1,2,\ldots, n$.
Since $v$ is regular we can use move {\tt (I)} at $v$ according to the partition $\{\E_i\}$. 
This will yield a graph $G_{in}$ with $v$ replaced by $n$ vertices $v_1, v_2, \ldots, v_n$ each receiving from one of the vertices $v$ received from and each emitting one edge to $r(f_0)$.
We can now use move {\tt (R)} to collapse the $v_i$, this will yield the graph $E$. 
Hence $G \Meq E$.
\end{proof}

We now deal with the general case. 

\begin{thm}[Collapse] \label{collapse}
Let $G = (G^0, G^1, r_G, s_G)$ be a graph with finitely many vertices, and let $v \in G^0$ be a regular vertex which does not support a loop of length one. 
Define a graph $E = (E^0, E^1, r_E, s_E)$ by $E^0 = G^0 \setminus \{ v \}$,
\[
	E^1 = \left( G^1 \setminus (r^{-1}(v) \cup s^{-1}(v)) \right) \bigcup \left( \{ [ef] \mid e \in r^{-1}(v), f \in s^{-1}(v) \} \right),
\]
the range and source maps extend those of $G$, and satisfy $r_E([ef]) = r_G(f)$ and $s_E([ef]) = s_G(e)$. 
We have $G \Meq E$.
\end{thm}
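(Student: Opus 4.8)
The plan is to reduce to Lemma~\ref{easyCollapse} by first out-splitting $v$ into pieces that each emit a single edge. I would begin by disposing of a degenerate case: if $v$ is a source then $r_G^{-1}(v) = \emptyset$, so the set $\{[ef]\}$ in the definition of $E^1$ is empty and $E$ is obtained from $G$ simply by deleting $v$ together with the edges it emits; since $v$ is regular this is precisely an application of move {\tt (S)} (Proposition~\ref{moveS}), and hence $G \Meq E$. So from now on I may assume $v$ is not a source, i.e. $r_G^{-1}(v) \neq \emptyset$.

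Since $v$ is regular it is not a sink and $s_G^{-1}(v)$ is finite and nonempty, say $s_G^{-1}(v) = \{f_1, \dots, f_m\}$. I would apply move {\tt (O)} (Theorem~\ref{outSplit}) at $v$ using the partition of $s_G^{-1}(v)$ into the singletons $\E_i = \{f_i\}$; as all blocks are finite the move applies and yields $G \Meq G_{os}$, where $v$ is replaced by vertices $v^1, \dots, v^m$. In $G_{os}$ each $v^i$ emits exactly the one edge $f_i$ and receives a copy $e^i$ of every edge $e \in r_G^{-1}(v)$. The hypothesis that $v$ supports no loop of length one means $r_G^{-1}(v) \cap s_G^{-1}(v) = \emptyset$, so no edge at $v$ is simultaneously incoming and outgoing; consequently the source of each $e^i$ is $s_G(e) \neq v$ and the range of $f_i$ is $r_G(f_i) \neq v$. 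In particular the $v^i$ neither emit to nor receive from one another, each $v^i$ is regular and (since $r_G^{-1}(v) \neq \emptyset$) not a source, and the unique edge $f_i$ emitted by $v^i$ is not a loop of length one.

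Thus each $v^i$ satisfies the hypotheses of Lemma~\ref{easyCollapse}, and I would collapse them one at a time. Because the $v^i$ share no edges among themselves, collapsing one of them leaves the incoming and outgoing edges of the others untouched, so the hypotheses of the lemma persist and the procedure may be iterated until all of $v^1, \dots, v^m$ are removed; by transitivity of $\Meq$ this gives $G_{os} \Meq E'$ for the resulting graph $E'$. Collapsing $v^i$ replaces each length-two path $e^i f_i$ by an edge from $s_G(e)$ to $r_G(f_i)$, so the edges of $E'$ are exactly the old edges of $G$ avoiding $v$, together with one edge from $s_G(e)$ to $r_G(f)$ for each pair $(e,f) \in r_G^{-1}(v) \times s_G^{-1}(v)$. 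This matches the description of $E$, so $E' \cong E$, and combining with $G \Meq G_{os}$ yields $G \Meq E$.

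I expect the main obstacle to be the bookkeeping in the final step: one must verify carefully that out-splitting introduces no loop of length one at any $v^i$ (this is exactly where the standing hypothesis on $v$ is used) and that the edges produced by the successive collapses reproduce the set $\{[ef] \mid e \in r_G^{-1}(v),\ f \in s_G^{-1}(v)\}$ precisely, with no spurious coincidences or identifications arising from edges that were already loops through $v$.
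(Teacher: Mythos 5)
Your proof is correct and takes essentially the same route as the paper's: out-split $v$ according to the partition of $s^{-1}(v)$ into singletons, observe that the hypothesis on loops of length one keeps the new vertices $v^1,\ldots,v^m$ disconnected from one another with each emitting a single non-loop edge, and then collapse them one by one via Lemma~\ref{easyCollapse}. If anything you are slightly more careful than the paper, which does not separately treat the degenerate case where $v$ is a source (there Lemma~\ref{easyCollapse} does not apply, and one needs move {\tt (S)} exactly as you note).
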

\begin{proof}
Let $\{ e_1, e_2, \ldots, e_n \}$ be the edges with source $v$.
If $n = 1$ then we appeal to Lemma~\ref{easyCollapse}, otherwise we define $\E_i = \{ e_i \}$, for $i = 1,2,\ldots, n$. 
We can use move {\tt (O)} at $v$ according to the partition $\{\E_i\}$ of $s^{-1}(v)$. 
This gives a graph, $G_{os}$ say, with $G_{os}^0 = (G^{0} \setminus \{ v \}) \cup \{ v_1, v_2, \ldots, v_n \}$,
\[
	G_{os}^1 = \left( G^1 \setminus (r^{-1}(v) \cup s^{-1}(v)) \right) \bigcup \{ f_i \mid f \in r^{-1}(v), i =1,\ldots, n \} \bigcup \{ \bar{e}_i \mid e = e_i \},
\]
with range and source map that agree with those of $G$ when the edge is in $G^1$ but with
\[
	r_{os}(f_i) = v_i, \quad r_{os}(\bar{e}_i) = r_G(e_i), \quad s_{os}(f_i) = s_G(f), \quad  \text{ and } \quad s_{os}(\bar{e}_i) = v_i.
\]
Note that there are no edges between the $v_i$, that none of them support a loop of length one and that each of them emits exactly one edge.
We now use Lemma~\ref{easyCollapse} to collapse each of the $v_i$, this will yield the graph $E$. 
Thus,
\[
	G \Meq G_{os} \Meq E. 
\] 
\end{proof}

\begin{exam} \label{etExample}
As an application of Theorem~\ref{collapse} we see that 
\[
	\xymatrix{
		\star \ar@(dl,ul)^{4} \ar[r] & \bullet 
	}
	\Meq 
	\xymatrix{
		\circ \ar@/^1em/[r]^{2} &	\star \ar@/^1em/[l]^{2} \ar[r] & \bullet 
	}
	\Meq
	\xymatrix{
		\circ \ar@(dl,ul)^{4} \ar[r]^{2} & \bullet 
	}
\]
A number by an edge indicates that there are that many edges, so for instance in the leftmost graph there are four loops at $\star$.
This simplifies drawings, especially when dealing with infinite emitters.

The two leftmost graphs are seen to be move equivalent by collapsing the vertex $\circ$ in the middle graph.
The two rightmost are equivalent by collapsing $\star$ in the middle graph.
That the two outer graphs are $C^*$-equivalent was proved in \cite[Example 5.2]{eilersTomforde}. 
We return to this specific example again in the last section.
\end{exam}

\subsection{Move {\tt (T)}}

We will now discuss a move that pertains to infinite emitters. 
The idea is to make an infinite emitter emit infinitely to as many vertices as possible. 
We will prove that the move is in $\Meq$, which implies that it preserves stable equivalence of graph algebras.
It has been proved  in \cite{ers_amp} that it in fact preserves isomorphism of graph algebras. 

\begin{thm}[Move {\tt (T)}] \label{moveT}
Let $G = (G^0, G^1, r_G, s_G)$ be a graph and let $\alpha = \alpha_1 \alpha _2 \cdots \alpha_n$ be a path in $G$ and suppose that $A_G(s(\alpha_1), r(\alpha_1))| = \infty$. 
Let $E = (E^0, E^1, r_E, s_E)$ be the graph with vertex set $G^0$, edge set
 \[
	E^1 = G^1 \cup \{ \alpha^m \mid m \in \N \},
\]
and range and source maps that extend those of $G$, and have $r_{E}(\alpha^m) = r_{G}(\alpha)$ and $s_{E}(\alpha^m) = s_{G}(\alpha)$.
Then $G \sim_M E$.
\end{thm}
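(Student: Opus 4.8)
The plan is to induct on the length $n$ of $\alpha$ and reduce everything to the case $n = 2$. Throughout I write $v_0 = s(\alpha_1)$, $v_1 = r(\alpha_1)$, and $v_n = r(\alpha)$, and recall that $E$ is $G$ with $\aleph_0$ extra edges $v_0 \to v_n$ adjoined. The base case $n = 1$ needs no work: there $E$ is $G$ with countably many extra edges $v_0 \to v_1$ adjoined, but $A_G(v_0,v_1) = \infty$ already, so $E \cong G$.

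For the inductive step I would take $n \geq 3$, assume the statement for all shorter paths, and set $x = r(\alpha_{n-1}) = s(\alpha_n)$. First I apply the inductive hypothesis to the subpath $\alpha_1\cdots\alpha_{n-1}$, whose first edge $\alpha_1$ still has infinite multiplicity, to get $G \Meq G_1$, where $G_1$ is $G$ with $\aleph_0$ edges $v_0 \to x$ adjoined. Since now $A_{G_1}(v_0,x) = \infty$ and $\alpha_n$ is an edge $x \to v_n$, the case $n=2$ applied to the length-two path $v_0 \to x \to v_n$ yields $G_1 \Meq G_2$, where $G_2$ is $G_1$ with $\aleph_0$ edges $v_0 \to v_n$ adjoined. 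Finally I note that $G_2$ equals $E$ with $\aleph_0$ edges $v_0 \to x$ adjoined, and since $E$ contains $\alpha_1\cdots\alpha_{n-1}$ with infinite first multiplicity the inductive hypothesis also gives $E \Meq G_2$. Chaining, $G \Meq G_1 \Meq G_2 \Meq E$.

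The substance is the case $n=2$; write $v_2 = v_n$. If $v_1 = v_2$ the new edges run $v_0 \to v_1$ and merge into the existing infinitely many such edges, so $E \cong G$; thus I assume $v_1 \neq v_2$, and for the moment also $v_0 \neq v_1$. The key operation $\sigma$ I would use is: out-split $v_1$ (Theorem~\ref{outSplit}) along $\E_1 = \{\alpha_2\}$, $\E_2 = s^{-1}(v_1)\setminus\{\alpha_2\}$, then collapse (Lemma~\ref{easyCollapse}) the new vertex $v_1^1$, which emits the single edge $v_1^1 \to v_2$; if $\E_2 = \emptyset$ I instead collapse $v_1$ itself. This is legal: at most $\E_2$ is infinite, and $v_1^1$ is regular, is not a source, and supports no loop of length one since $v_2 \neq v_1^1$. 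I would then apply $\sigma$ to both $G$ and $E$. Because the adjoined edges $v_0 \to v_2$ meet $v_1$ neither as source nor as range, they are spectators to $\sigma$, so $\sigma(E)$ is $\sigma(G)$ with those $\aleph_0$ edges $v_0 \to v_2$ added back. But collapsing $v_1^1$ already creates $\aleph_0$ edges $v_0 \to v_2$ inside $\sigma(G)$, since the infinitely many edges $v_0 \to v_1$ are duplicated onto $v_0 \to v_1^1$ and then collapsed onto $v_2$; hence the spectators are absorbed, $A_{\sigma(E)}(v_0,v_2) = \infty + \infty = \infty = A_{\sigma(G)}(v_0,v_2)$, while every other entry matches. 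Thus $\sigma(G) = \sigma(E)$ as graphs and $G \Meq \sigma(G) = \sigma(E) \Meq E$.

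The main obstacle will be the bookkeeping behind the equality $\sigma(G) = \sigma(E)$, together with the vertex coincidences. The delicate one is $v_0 = v_1$, where $\alpha_1$ is an infinite self-loop: there the out-split distributes the self-loops among the two copies of $v_0$, and the edges adjoined to form $E$ become out-edges of the surviving copy $v_0^2$ rather than external spectators. I expect that after the collapse these are again swallowed by the infinitely many edges produced, so that $\sigma(G) = \sigma(E)$ still holds; confirming this is the one place demanding genuine care. In every case the crux is the identity $\infty + \infty = \infty$ for edge multiplicities, which is precisely why the infinite-emitter hypothesis cannot be dropped.
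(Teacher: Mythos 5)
Your proposal is correct and takes essentially the same route as the paper: induction on $|\alpha|$ with the inductive step reduced to the length-two case by temporarily adjoining (and later removing, via the induction hypothesis run backwards) infinitely many edges into $s(\alpha_n)$, and the length-two case handled by out-splitting at the middle vertex with partition $\{\alpha_2\}$ versus the rest, collapsing the resulting single-edge emitter, and absorbing the new edges via $\infty+\infty=\infty$. The only organizational difference is that the paper adds the infinite edges in the out-split graph and then out-amalgamates $v_1^1,v_1^2$ back to recover $E$, whereas you apply the identical out-split-and-collapse to both $G$ and $E$ and observe the results coincide --- a cosmetic variation that, if anything, deals with the coincidence $s(\alpha)=s(\alpha_2)$ more explicitly than the paper does.
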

\begin{proof}
We will do the proof by induction on the length of $\alpha$.
If $|\alpha| = 1$ then since $A_G(s(\alpha_1), r(\alpha_1)| = \infty$, $G$ and $E$ are isomorphic. 

Suppose that $|\alpha| = 2$ and let $v = s(\alpha_2)$
If $\alpha_2$ is a loop of length one then $E \cong G$, so there is nothing to prove. 
We will therefore assume that $r(\alpha_2) \neq v$. 
If $\alpha_2$ is the only edge $v$ emits, we can collapse $v$ (as in Theorem~\ref{collapse}) in both $E$ and $G$, and get isomorphic graphs. 
So we will assume that $v$ emits at least two edges. 
Partition $s_G^{-1}(v)$ as $\E_1 = \{ \alpha_2 \}, \E_2 = s_G^{-1}(v) \setminus \{ \alpha_2 \}$.  
We can out-split $G$ at $v$ according to this partition and get a graph $G_{os}$ where $v$ is replaced by two vertices $v_1$ and $v_2$.
The first will emit only a copy of $\alpha_2$, whereas the second will emit copies of every other edge $v$ emitted. 
They will both receive copies of everything $v$ received. 
Arguing as above we can add infinitely many edges from $s(\alpha)$ to $r(\alpha)$ without changing move equivalence class. 
Doing so does not affect the edges going into or out of $v_1$ and  $v_2$. 
Hence we can out-amalgamate (i.e. use move {\tt (O)} backwards) them back to $v$. 
This yields a graph isomorphic to $E$, and since we only used our moves, we have $G \Meq E$.

Suppose now that $k > 2$ and that we have proved the theorem for paths of length less than $k$, and that $\alpha$ is a path of length $k$. 
Using the induction hypothesis on the path $\alpha_1 \alpha_2 \cdots \alpha_{k-1}$, we can, without changing move equivalence class, add infinitely many edges from $s(\alpha_1)$ to $s(\alpha_{k-1})$.
Hence there will be a path, $\beta$ of length two from $s(\alpha)$ to $r(\alpha)$ with $A_G(s(\beta_1),r(\beta_1)) = \infty$. 
Using that the induction hypothesis on $\beta$ we can add infinitely many edge from $s(\beta) = s(\alpha)$ to $r(\beta) = r(\alpha)$. 
Using the induction one final time, we remove the edges we added from $s(\alpha)$ to $s(\alpha_{k-1}$. 
Thus we have construct a graph isomorphic to $E$ using only allowed moves.
\end{proof}

For nice applications of move {\tt (T)} see \cite{ers_amp}. 
As a simple example to see how it works consider:
\[
	\xymatrix{
		\star \ar@/^1em/[r]^{\infty} & \bullet \ar@/^1em/[l] 
	}
	\quad
	\Meq
	\quad
	\xymatrix{
		\star \ar@(dl,ul)^{\infty} \ar@/^1em/[r]^{\infty} & \bullet \ar@/^1em/[l]
	}
\]
\linebreak 
The equivalence follows from Theorem~\ref{moveT}, since there are infinitely many edges from $\star$ to $\bullet$, and an edge from $\bullet$ to $\star$.
Using Theorem~\ref{collapse} to collapse $\bullet$, we see that both graphs are move-equivalent to 
\[
	\xymatrix{
		\star \ar@(dl,ul)^{\infty}
	}
\]
Hence, the algebras are all stably isomorphic to $\mathcal{O}_\infty$.

\section{Reductions (on graphs)}

The purpose of this section is to show the following proposition.

\begin{prop} \label{fruit}
Let $G$ be a graph with finitely many vertices such that $C^*(G)$ is purely infinite simple. 
There is some graph $E$ with finitely many vertices, no sinks, no sources, and satisfying the following:
\begin{enumerate}[(i)]
	\item Every vertex in $E$ supports a loop of length one,
	\item any two regular vertices in $E$ are connected by a path of regular vertices, 
	\item if $u \in E$ is an infinite emitter, then $u$ emits infinitely many edges to every vertex in $E$, and,
	\item $E \Meq G$. 
\end{enumerate}
\end{prop}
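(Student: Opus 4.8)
The goal is to take a graph $G$ with $C^*(G)$ purely infinite simple and, using only moves in $\Meq$, massage it into a graph $E$ satisfying the four bullet points. I would proceed by establishing a sequence of normalizations, each achieved by the derived moves already proved to preserve $\Meq$ (principally Collapse, Theorem~\ref{collapse}, and move {\tt (T)}, Theorem~\ref{moveT}), together with the basic moves {\tt (S)}, {\tt (I)}, {\tt (O)}, {\tt (R)}. The first step is to recall what pure infiniteness and simplicity tell us structurally: simplicity of $C^*(G)$ forces $G$ to be \emph{cofinal} and to satisfy Condition~(K)-type strong connectivity, and pure infiniteness rules out the AF case, so every vertex must in particular connect to a loop. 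I would use these facts freely as the starting structural input.

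\textbf{Step 1 (eliminate sinks and sources).} Since $C^*(G)$ is simple, $G$ cannot have a sink unless $G$ is a single point with no loop, which is excluded by pure infiniteness; so $G$ has no sinks. Sources that are regular can be removed by move {\tt (S)}. A little care is needed because removing a source may create new sources, but since $G^0$ is finite this terminates, and simplicity guarantees we never strand the graph. \textbf{Step 2 (create loops of length one at every vertex).} This is where Collapse does the heavy lifting in reverse-spirit: using simplicity, every regular vertex lies on a loop; I would use in/out-splittings and Collapse to concentrate loops so that after finitely many moves each vertex carries a loop of length one. Concretely, I expect to argue that one can always find, through a vertex $v$, a returning path, and then collapse the intermediate regular vertices (Theorem~\ref{collapse} applies precisely because we avoid length-one loops during the collapse) until the path becomes a length-one loop at $v$.

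\textbf{Step 3 (strong connectivity of the regular part, bullet (ii)).} Cofinality plus the loop structure should give that any two regular vertices communicate; the content is to route the connecting path \emph{through regular vertices only}, which I would arrange by collapsing away or splitting around the infinite emitters so they do not obstruct regular-to-regular passage. \textbf{Step 4 (saturate infinite emitters, bullet (iii)).} This is exactly what move {\tt (T)} is designed for: given an infinite emitter $u$ and any vertex $w$ with $u \geq w$ along a path starting with an infinite-multiplicity edge, Theorem~\ref{moveT} lets me add infinitely many edges $u \to w$. Using simplicity (which makes every vertex reachable from $u$) I would apply move {\tt (T)} repeatedly so that each infinite emitter emits infinitely to every vertex.

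\textbf{Main obstacle.} The delicate point is \emph{ordering} and \emph{interaction} of these normalizations: each step must not destroy the invariants secured by earlier steps. For instance, saturating infinite emitters via move {\tt (T)} changes the edge set and could in principle interfere with the ``every vertex supports a length-one loop'' condition or with the regular-path connectivity; conversely, the splittings and collapses used to install length-one loops alter which vertices are regular and could create or merge infinite emitters. So the heart of the proof is to choose an order — I would expect sinks/sources first, then length-one loops, then infinite-emitter saturation last, checking at each stage that the move applied respects the hypotheses of the relevant theorem (in particular that Collapse is only invoked at regular vertices not supporting a length-one loop, and that move {\tt (T)} is invoked along a path beginning with an infinite-multiplicity edge). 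Verifying that the process terminates — that finiteness of $G^0$ plus the controlled nature of each move prevents an infinite regress — and that the final graph simultaneously satisfies all four conditions is the real work, and I would treat it as the crux rather than any single move.
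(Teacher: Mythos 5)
Your outline follows the same broad arc as the paper (sinks/sources first, then length-one loops, then regular connectivity, then saturating infinite emitters last), but the concrete mechanism you propose for Step 2 has a genuine gap. You plan to \emph{create} a length-one loop at each vertex $v$ by finding a return path through $v$ and collapsing the intermediate vertices. Theorem~\ref{collapse} applies only to a \emph{regular} vertex that does \emph{not} already support a length-one loop, so this procedure stalls as soon as the return path passes through an infinite emitter or through a vertex that already carries a length-one loop, and you offer no way around either case. The paper's Lemma~\ref{onlySimpleLoops} sidesteps loop-creation entirely: if a regular vertex fails to support a length-one loop, that vertex is itself collapsed --- i.e.\ deleted --- and since the vertex count strictly decreases, the process terminates with every surviving regular vertex supporting a length-one loop. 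No shrinking of paths is ever needed. Length-one loops at infinite emitters then come for free: if $u$ emits infinitely many edges to some $v$, then $v \geq u$ by Lemma~\ref{boringPathStructure}, so move {\tt (T)} adds infinitely many loops at $u$. This deletion-based normalization, with its trivial termination argument, is the key idea your proposal is missing.

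Several secondary points also need repair. In Step 1, move {\tt (S)} requires sources to be \emph{regular}; the paper proves this via the hereditary set $H = \{w \mid u \geq w\} \setminus \{u\}$, whose saturation must be everything while saturation adds no singular vertices --- you skip this, and your claim that simplicity alone forbids sinks is false (a line ending in a sink gives a simple matrix algebra; it is pure infiniteness that excludes sinks, via every vertex connecting to a loop). In Step 3, ``collapsing away'' infinite emitters is impossible since Collapse needs a regular vertex; the paper's actual device is an out-split at the singular vertex $v$ with partition $\mathcal{E}_1 = \{f, e\}$ ($f$ a length-one loop at $v$, $e$ an edge toward $w$) versus the rest, producing a regular, loop-supporting vertex $v_1$ that bridges $u$ to $w$, with termination because $v_1$ can only reach what $u$ can. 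Finally, the interaction worries you correctly flag but explicitly defer (``I would treat it as the crux'') are exactly what the paper resolves by its choice of mechanisms and ordering: move {\tt (T)} only adds edges out of infinite emitters, so performed last it cannot destroy conditions (i) or (ii). As written, the proposal identifies the right difficulties but leaves them unsolved, and its one concrete novel mechanism would fail.
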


First we show that we do not have to worry about sinks and sources.

\begin{lem} \label{removeSources}
Let $G$ be a graph with finitely many vertices. 
If $C^*(G)$ is purely infinite simple then $G \Meq E$, where $E$ is some graph with no sinks or sources.
\end{lem}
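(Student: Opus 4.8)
The plan is to treat sinks and sources separately and to lean on the fact that the only move I need here, move {\tt (S)}, preserves pure infiniteness and simplicity. First I would observe that sinks can be ruled out for free. If $v \in G^0$ were a sink, then the only path with source $v$ is the trivial one, so the corner $p_v C^*(G) p_v$ equals $\mathbb{C} p_v$ and $p_v$ is an abelian (in particular finite) projection. A purely infinite simple $C^*$-algebra has no nonzero finite projections, so $G$ has no sinks. Moreover this stays true under the reductions below: move {\tt (S)} produces a full corner (Proposition~\ref{moveS}), hence a Morita equivalent algebra, and both pure infiniteness and simplicity pass to corners of purely infinite simple $C^*$-algebras. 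Thus every graph appearing in the argument will again have purely infinite simple algebra, and in particular no sinks.

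The crux is to show that in a graph with purely infinite simple algebra and at least two vertices, every source is a \emph{regular} vertex, so that move {\tt (S)} actually applies. Suppose instead $v$ is a singular source. Put $H = G^0 \setminus \{v\}$. Since $v$ receives no edges, no path beginning in $H$ can terminate at $v$, so $H$ is hereditary. For saturation, the only vertex outside $H$ is $v$, and $v$ is not regular; hence there is no regular vertex outside $H$ all of whose emitted edges land in $H$, so $H$ is saturated. As $\emptyset \neq H \neq G^0$, this proper nonempty hereditary saturated set produces a proper nonzero gauge-invariant ideal, contradicting simplicity. Therefore every source of $G$ (and of each graph obtained below) is regular.

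Finally I would iterate. As long as the current graph has a source, that source is regular by the previous paragraph, so I remove it with move {\tt (S)}; the resulting graph is move equivalent to the previous one and again has purely infinite simple algebra. Removing a source deletes only edges emitted by it, so no out-degree changes and no new sink is created, while the vertex count strictly decreases. The process cannot consume every vertex: pure infiniteness forces the presence of a cycle, the vertices on a cycle receive edges and so are never sources, and a source's out-edges cannot lie on a cycle (that would let the source be reached), so cycles are never touched. Hence the iteration halts at a graph $E$ with $G \Meq E$ having no sources and, being still purely infinite simple, no sinks.

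The step I expect to be the main obstacle is the claim that a source must be regular: this is where the hereditary/saturated computation together with the gauge-invariant ideal structure theorem does the real work, and the rest is bookkeeping to confirm that pure infiniteness—hence the absence of sinks and of singular sources—survives every application of move {\tt (S)} and that the removal process terminates without deleting the graph entirely.
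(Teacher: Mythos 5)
Your proof is correct, and while the overall skeleton is the same as the paper's (iterate move {\tt (S)}, after showing sinks are impossible and every source is regular), both key ingredients are established by genuinely different arguments. For the absence of sinks, the paper cites the fact that in a purely infinite simple graph algebra every vertex connects to a loop, whereas you argue directly that a sink $v$ gives the abelian corner $p_v C^*(G) p_v = \mathbb{C}p_v$, a finite projection forbidden by pure infiniteness --- a more self-contained, $C^*$-algebraic argument. For regularity of sources, the paper takes the forward-reachable set $H = \{w \mid u \geq w\} \setminus \{u\}$ of the source $u$, notes it is hereditary, and uses that its saturation must be all of $G^0$ together with the observation that saturation only adjoins \emph{regular} vertices, forcing $u$ regular; you instead assume $u$ is singular and observe that the complement $G^0 \setminus \{u\}$ is hereditary (nothing points at a source) and saturated \emph{vacuously}, precisely because the one excluded vertex is singular, yielding a proper nonzero ideal. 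Your version is arguably cleaner in that the saturation condition never has to be computed, while the paper's version makes visible the general principle (saturation adds no singular vertices) that it reuses implicitly when new sources appear during the iteration. Your bookkeeping for the iteration --- no new sinks since only out-edges of the removed vertex disappear, pure infiniteness persists through full corners so each new source is again regular, and cycles guarantee the process halts at a nonempty graph --- is more detailed than the paper's and correct; the only point worth flagging is that your complement argument needs at least two vertices, which you noted and which is automatic since a lone source would be a sink.
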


For the following proof we need to know about the ideal structure of graph algebras. 
For this, and the notions of hereditary and saturated sets, we refer the reader to \cite{bhrs_idealstruct}.

\begin{proof}[Proof of Lemma~\ref{removeSources}.]
Since $C^*(G)$ is purely infinite simple every vertex in $G$ connects to a loop  \cite[Remark 2.16]{ddmt_arbgraph}. 
Hence $G$ has no sinks. 
Let $u \in G^0$ be a source and put $H = \{ v \in G^0 \mid u \geq v\} \setminus \{ u \}$. 
Because $u$ is a source $H$ is hereditary, and since $C^*(G)$ is simple the saturation of $H$ must be all of $G$. 
As no singular vertices are added when taking saturations, we must have that $u$ is regular. 
We can now use move {\tt (S)} to remove the sources from $G$. 
The resulting graph will have fewer vertices than $G$ although it may have more sources. 
Again we see that none of these new sources can be infinite emitters.
As there are only finitely many vertices in $G$ if we just keep removing sources we will eventually reach a graph with no sources. 
This graph, $E$ say, has no sinks and since we only used the move {\tt (S)} we must have $G \Meq E$. 
\end{proof}

We will now consider how the vertices interconnect.

\begin{lem} \label{boringPathStructure}
If $G$ is a graph with finitely many vertices, no sinks, no sources and $C^*(G)$ is simple, then any vertex in $G$ can reach any other vertex. 
\end{lem}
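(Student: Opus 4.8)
The plan is to argue by contradiction using the correspondence between ideals and saturated hereditary subsets of $G^0$. First I would record the consequence of simplicity that we need: since $C^*(G)$ has no nontrivial ideals it has in particular no nontrivial gauge-invariant ideals, so by the description of the gauge-invariant ideal lattice in \cite{bhrs_idealstruct} the only saturated hereditary subsets of $G^0$ are $\emptyset$ and $G^0$. No use of condition (L) or cofinality will be needed; the saturated-hereditary triviality will do all the work together with the ``no sources'' hypothesis.

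Now fix a vertex $v$ and set $H = \{ w \in G^0 \mid v \geq w \}$, the set of vertices reachable from $v$. This set is nonempty (it contains $v$) and hereditary (if $v \geq w$ and $w$ emits an edge $e$, then $v \geq r(e)$). The goal is to show $H = G^0$, for then $v$ reaches every vertex and, as $v$ was arbitrary, the lemma follows. Note that $H$ need not itself be saturated, so I cannot conclude $H = G^0$ directly; instead I would pass to the saturation $\overline{H}$, which is saturated and hereditary and hence equals $G^0$ by the previous paragraph, and then rule out the possibility that $\overline{H}$ is strictly larger than $H$.

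Suppose then that $K = G^0 \setminus H$ is nonempty. Since $H$ is hereditary, no edge from a vertex of $H$ can land in $K$; hence every edge whose range lies in $K$ has its source in $K$ as well. As $G$ has no sources, every vertex of $K$ receives at least one edge, and that edge must come from $K$. Thus in the (finite) subgraph on $K$ every vertex has an incoming edge, and following incoming edges backwards must eventually repeat a vertex, producing a loop $\gamma$ all of whose vertices lie in $K$. The crux is then to observe that no vertex of $\gamma$ can ever be absorbed into $\overline{H}$: a vertex enters the saturation only when it is regular and all the vertices it emits to already lie in the set built so far, but each vertex of $\gamma$ emits to its successor on $\gamma$, which is again in $K$; so if $c \in \gamma$ were the first vertex of $\gamma$ to be absorbed, its successor would already have to lie in $\overline{H}$ before it, contradicting minimality. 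Therefore $\gamma \cap \overline{H} = \emptyset$, contradicting $\overline{H} = G^0$. Hence $K = \emptyset$ and $H = G^0$.

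The step I expect to be the main obstacle is the crux above: realising that the reachability set need not be saturated, and that the right contradiction comes from a loop trapped in its complement, together with checking that ``no sources'' is exactly what forces such a loop to exist. One must also be mildly careful about infinite emitters, but they cause no difficulty here, since the saturation only ever absorbs regular vertices; an infinite emitter lying on $\gamma$ could therefore never enter $\overline{H}$ at all, which only reinforces the contradiction.
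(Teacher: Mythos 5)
Your proof is correct, but it takes a genuinely different route from the paper. The paper's proof has two ingredients: first, using \emph{no sources} plus finiteness of $G^0$ to trace edges backwards from any vertex until a vertex repeats, showing every vertex is reached by a loop; second, citing \cite[Corollary 2.15]{ddmt_arbgraph} to get that simplicity forces $G$ to be cofinal, so every vertex reaches every loop, and composing the two reachabilities finishes the argument. You instead work directly with the gauge-invariant ideal lattice: simplicity gives that the only saturated hereditary subsets are $\emptyset$ and $G^0$, you take $H$ to be the reachability set of a fixed vertex, and you resolve the genuine subtlety that $H$ is hereditary but need not be saturated by trapping a loop in the complement $K$ (your backward-tracing inside $K$ is the same combinatorial use of ``no sources plus finiteness'' that the paper makes, just deployed in the complement) and observing that the iterative saturation, which only ever absorbs a regular vertex once all its emitted edges land in the set already built, can never absorb the first vertex of that loop, since its successor on the loop would have to be absorbed strictly earlier. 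Your ``first absorbed vertex'' argument is sound with the stage-by-stage description of the saturation, and your remark about infinite emitters is apt. What the paper's route buys is brevity, at the cost of importing cofinality as a black box; what yours buys is self-containedness relative to the ideal-structure facts from \cite{bhrs_idealstruct} that the paper already invokes elsewhere (e.g.\ in Lemma~\ref{removeSources}), and in effect you reprove exactly the instance of cofinality that is needed. Both arguments use simplicity only through standard, correct consequences, so the proposal stands as a valid alternative proof.
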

\begin{proof}
Since $G$ has no sources and only finitely many vertices every vertex can be reached by some loop.
To see this, fix some vertex $u_0 \in G^0$.
Either $u_0$ is on a loop or it is not. 
If it is we are done, so suppose it is not. 
As $u_0$ is not a source there is some vertex $u_1 \in G^0 \setminus \{ u_0 \}$ with an edge to $u_0$. 
Either $u_1$ is on a loop or it is not. 
If it is not then since $u_1$ is not a source there is some vertex $u_2 \in G^0 \setminus \{ u_0, u_1 \}$ that points to $u_1$. 
Continuing we must eventually reach a vertex that is on a loop since there are only finitely many vertices.
By \cite[Corollary 2.15]{ddmt_arbgraph} $G$ is cofinal because $C^*(G)$ is simple, in particular every vertex in $G$ can reach every loop in $G$. 
Let $u,v \in G^0$. 
There is some loop $\mu \in G^*$ such that $\mu \geq v$ and so
\[
	v \geq \mu \geq u. 
\]
Hence $v \geq u$.
\end{proof}

\begin{lem} \label{onlySimpleLoops}
If $G$ is a graph with finitely many vertices, no sinks, no sources and $C^*(G)$ is simple then $G \Meq E$ for some graph $E$ with no more vertices than $G$, no sinks, no sources and where every regular vertex supports a loop of length one.
\end{lem}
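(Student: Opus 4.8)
The plan is to eliminate the offending vertices one at a time using the Collapse move, Theorem~\ref{collapse}. The point is that that move applies to \emph{any} regular vertex which does not support a loop of length one, with no further hypothesis, so every regular vertex of $G$ without a loop of length one is immediately collapsible. I would therefore repeatedly collapse such vertices: as long as the current graph has a regular vertex $v$ supporting no loop of length one, replace it by the collapse $G'$ of $G$ at $v$, so that $G \Meq G'$ and $|{G'}^0| = |G^0| - 1$. Since the number of vertices strictly decreases at each step, the process stops after finitely many moves at a graph $E$ with $|E^0| \leq |G^0|$, $G \Meq E$, and no regular vertex lacking a loop of length one --- which is exactly what we want, provided $E$ also has no sinks and no sources.

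The substance of the proof is then the bookkeeping needed to see that the intermediate graphs retain the structural hypotheses, and in particular that $E$ has no sinks and no sources. Starting from a graph with no sinks and no sources, I would check that a single collapse of a regular, loop-of-length-one-free vertex $v$ preserves both properties: a surviving vertex that emitted an edge into $v$ now emits the composed edges $[ef]$, and since $v$ is not a sink there is at least one edge $f$ out of $v$, so no vertex becomes a sink; dually, since $v$ is not a source, the composed edges keep every vertex that received from $v$ from becoming a source, while vertices not adjacent to $v$ are untouched. Maintaining the absence of sources is also what guarantees that the Collapse move stays applicable at every step. The same composition argument shows that the regular/singular dichotomy is preserved for every surviving vertex, because composing the (finitely or infinitely many) edges into $v$ with the finitely many but nonempty family of edges out of $v$ preserves both finiteness and infiniteness of out-degrees.

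Finally, I would observe that collapsing $v$ can only \emph{create} loops of length one, never destroy them: a loop of length one at a vertex $u \neq v$ is an edge from $u$ to $u$, which lies in neither $r^{-1}(v)$ nor $s^{-1}(v)$ and so survives the collapse unchanged, whereas a new loop of length one at $u$ appears precisely when $u$ both emits to and receives from $v$. Together with the preservation of regular/singular status, this shows that the set of regular vertices supporting no loop of length one strictly shrinks at each step (it loses $v$ and gains nothing new), confirming both termination and the terminal property. I expect the only real obstacle to be this routine but slightly fiddly verification that the Collapse move respects ``no sinks, no sources'' and never manufactures a fresh regular vertex without a loop of length one; the simplicity hypothesis is not actually needed for the argument itself, and it is preserved along the way in any case because move equivalence preserves the stable isomorphism class of the algebra.
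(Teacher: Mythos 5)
Your proposal is correct and follows essentially the same route as the paper: the paper's proof simply collapses (Theorem~\ref{collapse}) a regular vertex without a loop of length one, notes the vertex count drops by one, and repeats until no such vertex remains. Your additional bookkeeping --- that collapsing preserves ``no sinks, no sources'' and the regular/singular status of surviving vertices, and never destroys existing loops of length one --- is accurate detail the paper leaves implicit, and you are right that simplicity plays no role in the argument.
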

\begin{proof}
Let $v \in G^0$ be a regular vertex that does not support a loop of length one. 
Then we can collapse $v$ as in Theorem~\ref{collapse}. 
Doing this will yield a graph $F$ with one vertex less than $G$ and with $F \Meq G$. 
Repeating this process we will eventually get rid of all the regular vertices that do not support a loop of length one. 
\end{proof}

\begin{proof}[Proof of Proposition~\ref{fruit}.]
By Lemmas~\ref{removeSources} and \ref{onlySimpleLoops} we may assume that every regular vertex in $G$ supports a loop of length one, and that $G$ has neither sinks nor sources.
Let $u \in G^0$ be an infinite emitter and let $v \in G^0$ be such that $u$ emits infinitely many edges to $v$. 
Since $v \geq u$, by Lemma~\ref{boringPathStructure}, we can use Theorem~\ref{moveT} to add infinitely many edges from $u$ to itself. 
So without changing move-equivalence class, we can assume that every vertex in $G$ supports a loop of length one. 

Suppose we are given three distinct vertices $u,v,w$ such that $u$ is regular, $v$ is singular, there is an edge from $u$ to $v$ and one from $v$ to $w$. 
Fix a loop of length one based at $v$, $f$ say, and an edge from $v$ to $w$, $e$ say. 
Partition $s^{-1}(v)$ as $\mathcal{E}_1 = \{ f, e\}$ and $\mathcal{E}_2 = s^{-1}(v) \setminus  \{ e,f \}$.
Using move {\tt (O)} on this partition we replace $v$ by the regular vertex $v_1$ that supports a loop of length one and satisfies that there is an edge from $u$ to $v_1$ and one from $v_1$ to $w$, and the vertex $v_2$ which is just like $v$ was before except it does not emit $e$. 
The vertex $v_1$ can only reach the vertices $u$ can.
Hence, repeating this process a finite number of times will eventually lead to a graph were all the regular vertices are connected by paths of regular vertices. 

We have now shown that we can find a graph $F$ with finitely many vertices, no sinks or sources and satisfying $(i)$, $(ii)$ and $(iv)$. 
Let $u \in F^0$ be an infinite emitter and let $v$ be a vertex to which $u$ emits infinitely many edges. 
Let $w \in F^0$ be any vertex. 
By Lemma~\ref{boringPathStructure} $u \geq w$, so using move {\tt (T)} (Theorem~\ref{moveT}) we can add infinitely many edges from $u$ to $w$.
Doing the same for  every infinite emitter and every $w \in F^0$, we obtain a graph $E$ with finitely many vertices, no sinks or sources, and satisfying $(i)$, $(ii)$, $(iii)$ and $(iv)$.
\end{proof}

\section{Moves on matrices}

We now turn to the matrices. 
At the heart of Franks' classification of irreducible subshifts \cite{franks_flowequivalence} is simple matrix manipulation and since we wish to follow Franks, we will now consider how the moves we have listed can be used to manipulate adjacency matrices. 
Reordering the rows (and then the columns accordingly) in a matrix does not change the move-equivalence class, so we may assume that the rows which contain $\infty$ are at the bottom. 
The non-zero rows with no $\infty$ entries (i.e. those that correspond to a regular vertex) will be called ``regular'' rows, non ``regular'' rows, will be called ``singular'' rows. 
Similarly for the columns. 

While the matrix manipulations described in this section do not depend on the associated $C^*$-algebras being simple, it does depend on the matrices having a certain form.
So unlike our moves on graphs, the utility of these matrix manipulations in the non-simple case is probably limited. 
Nevertheless, we optimistically state the results in as much generality as possible.

Let $A$ be a square matrix and let $m$ be the number of ``regular'' rows in $A$.
Define an $n \times n$ matrix with $m$ regular rows by
\[
	J_{nm} = 
	\begin{pmatrix}
		1 	& 0 		& \cdots 	& 0 		& 0 		& \cdots 	& 0 \\
		0 	& 1 		& \cdots 	& 0 		& 0 		& \cdots 	& 0 \\
    		\vdots& \vdots 	& \ddots 	& \vdots 	& \vdots 	& \ddots 	& 0 \\
		0	& 0		& \cdots	& 1		& 0		& \cdots 	& 0 \\
		\infty	& \infty	& \cdots 	& \infty	& \infty	& \cdots 	& \infty \\
		\vdots& \vdots 	& \ddots 	& \vdots	& \vdots	& \ddots 	& \vdots \\
		\infty	& \infty	& \cdots 	& \infty	& \infty	& \cdots 	& \infty
	\end{pmatrix}.
\]

Proposition~\ref{fruit} shows that if $C^*(G_A)$ is purely infinite simple we can find an $n \times n$ matrix $B$ of the form 
\[
	B = 
	\begin{pmatrix}
		b_{11} 	& b_{12}	& \cdots 	& b_{1m}	& b_{1(m+1)} 	& \cdots 	& b_{1n} \\
		b_{12}	& b_{22}	& \cdots 	& b_{2m}	& b_{2(m+1)}	& \cdots 	& b_{2n} \\
    		\vdots 	& \vdots 	& \ddots 	& \vdots 	& \vdots 		& \ddots 	& \vdots \\
		b_{m1}	& b_{m2}	& \cdots	& b_{mm}	& b_{m(m+1)}	& \cdots 	& b_{mn} \\
		0		& 0		& \cdots 	& 0		& 0			& \cdots 	& 0 \\
		\vdots 	& \vdots 	& \ddots 	& \vdots	& \vdots		& \ddots 	& \vdots \\
		0	 	& 0		& \cdots 	& 0		& 0			& \cdots 	& 0
	\end{pmatrix},	
\]
such that 
\[
	A \Meq J_{nm} + B,
\]
and the top-left $m \times m$ corner of $J_{nm} + B$, as well as all of it, is irreducible. 

We will now show that we can manipulate matrices in way similar to \cite[Corollary 2.2]{franks_flowequivalence}.
Due to the asymmetry between rows and columns we will need to do two proofs. 
This was not an issue for Franks, as $A$ is flow equivalent to $B$ if and only if $A^T$ is flow equivalent to $B^T$.
For us, there is a big difference between rows and columns, as seen for instance in the difference between out- and in-splittings, or the fact that we can remove (regular) sources, but not sinks. 

We first handle rows. 

\begin{lem} \label{rowMove}
Let $A$ be a square matrix with no zeros on the diagonal. 
Let $i,j$ be two distinct indices. 
If $a_{ij} \neq 0$ and row $j$ contains no $\infty$, then 
\[
	A \Meq \begin{pmatrix}	
			a_{11} 		& a_{12} 		& \cdots 	& a_{1j} 			& \cdots 	& a_{1n} \\
			\vdots 		& \vdots 		& 		& \vdots 			&            	& \vdots \\
			a_{i1} + a_{j1}	& a_{i2} + a_{j2}	&		& a_{ij} + a_{jj} - 1		&		& a_{in} + a_{jn} \\
			\vdots 		& \vdots 		& 		& \vdots 			&            	& \vdots \\
			a_{n1} 		& a_{n2} 		& \cdots 	& a_{nj} 			& \cdots 	& a_{nn} \\
		\end{pmatrix}.
\]
The matrix on the right is $A$ with the $j$'th row added to the $i$'th row but where we subtract $1$ at the $(i,j)$'th entry.
Note that this matrix has no zeros on the diagonal, and if the top-left corner was irreducible in $A$ then it is still irreducible. 
The same is true for all of $A$ and all of the right-hand side.
\end{lem}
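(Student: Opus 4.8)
The plan is to realize the row operation as a composition of an in-split and its inverse (an in-amalgamation), exactly mirroring how Lemma~\ref{easyCollapse} and Theorem~\ref{collapse} build larger moves out of {\tt (I)}, {\tt (O)}, and {\tt (R)}. The operation ``add row $j$ to row $i$, then subtract $1$ at entry $(i,j)$'' has a clean graph interpretation: in $G_A$ every edge out of vertex $i$ that lands on $j$ is to be replaced by the bundle of edges $i \to j \to (\cdot)$, i.e. we reroute one edge $i \to j$ through $j$'s out-neighborhood. Since $a_{jj} \neq 0$ (the diagonal has no zeros) and $a_{ij} \neq 0$, both vertices carry loops and the edge $i \to j$ exists, so the move is legitimately applicable.

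Concretely, first I would in-split at vertex $j$. Partition $r^{-1}(j)$ into $\E_1$, consisting of a single chosen edge $e_0$ from $i$ to $j$, and $\E_2 = r^{-1}(j) \setminus \{ e_0 \}$. Because row $j$ contains no $\infty$, vertex $j$ is regular, so Theorem~\ref{inSplit} applies and $G_A \Meq G_{is}$. This replaces $j$ by two vertices $j^1, j^2$: the vertex $j^1$ receives only $e_0$ (hence a single edge from $i$) and emits a full copy of everything $j$ emitted, while $j^2$ receives everything else that pointed to $j$ and likewise emits a copy of $j$'s out-edges. The vertex $j^1$ now emits exactly $a_{jk}$ edges to each $k$ (reading $j^2$ as the ``$j$-slot''), and in particular $a_{jj}$ edges back into the $j$-block and one edge arriving from $i$.

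The second step is to eliminate $j^1$ by a collapse-type move. Since $j^1$ receives the single edge $e_0$ from $i$ and emits $a_{jk}$ edges to each vertex $k$, I would like to out-amalgamate it back, or equivalently use Theorem~\ref{collapse} / Lemma~\ref{easyCollapse} logic in reverse: reading the new graph, the edges into $i$ are unchanged, while the edges out of $i$ become the old out-edges of $i$ (minus the one rerouted edge $e_0$, explaining the $-1$) together with the composite paths $i \to j^1 \to k$, which contribute $a_{jk}$ extra edges to each $k$. Identifying $j^2$ with the surviving copy of $j$ and bookkeeping the composite edges gives precisely the right-hand matrix: entry $(i,k)$ becomes $a_{ik} + a_{jk}$ for $k \neq j$, and $a_{ij} + a_{jj} - 1$ at $(i,j)$. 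The claims that the diagonal stays nonzero and that irreducibility of the corner (and of all of $A$) is preserved follow because the operation only adds edges and reroutes one, never disconnecting vertices; any path through the new composite edges corresponds to an old path, and conversely.

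\textbf{The main obstacle} will be step two: unlike the clean hypotheses of Lemma~\ref{easyCollapse}, the vertex $j^1$ does \emph{not} emit a single edge, so I cannot invoke that lemma directly. The honest route is to recognize the passage $G_{is} \to (\text{RHS graph})$ as an \emph{in-amalgamation}, i.e. move {\tt (I)} run backwards, merging $j^1$ and $j^2$ after first absorbing $j^1$'s single incoming edge and its out-bundle into the description of $i$'s edges. I must check carefully that the resulting graph is genuinely isomorphic to $G_B$ for $B$ the claimed matrix — in particular that the $-1$ correction appears because the rerouted edge $e_0$ is \emph{removed} from the count of direct $i \to j$ edges and replaced by composites, one of which lands back on the $j$-block exactly $a_{jj}$ times. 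The asymmetry warning in the surrounding text (rows versus columns behave differently) is precisely why this must be done with an in-split rather than an out-split, and I would take care that $e_0$ is chosen among the $a_{ij} \geq 1$ available edges so that $\E_1$ is nonempty and the partition is valid.
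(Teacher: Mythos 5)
Your first step is exactly the paper's: in-split at $j$ with $\E_1 = \{ e_0 \}$ for a single chosen edge $e_0$ from $i$ to $j$ and $\E_2 = r^{-1}(j) \setminus \{ e_0 \}$ (nonempty since $a_{jj} \neq 0$ gives a loop at $j$), legitimate because row $j$ has no $\infty$ and $a_{jj} \neq 0$, so $j$ is regular. But your second step contains a genuine error. The ``main obstacle'' you flag is not an obstacle: the single-out-edge hypothesis belongs only to Lemma~\ref{easyCollapse}; Theorem~\ref{collapse} requires merely that the collapsed vertex be regular and support no loop of length one, and $j^1$ satisfies both --- it is regular because $j$ is, and it supports no length-one loop because the in-split redirects $j^1$'s copies of $j$'s loops to $j^2$ (these are exactly the $a_{jj}$ edges you correctly describe as going ``into the $j$-block''). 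The paper simply collapses $j^1$ by Theorem~\ref{collapse}: this removes $e_0$ (the $-1$ at entry $(i,j)$) and adds $a_{jk}$ composite edges from $i$ to each $k$, yielding precisely the right-hand matrix --- the same bookkeeping you carried out.

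Your proposed replacement, an in-amalgamation merging $j^1$ and $j^2$, does not work: that operation is precisely the inverse of the in-split you just performed, so it returns $G_A$ rather than the right-hand graph (the merged vertex would again receive $e_0$ together with $r^{-1}(j) \setminus \{ e_0 \}$, each bundle $\{ e^1, e^2 \}$ of out-edges would merge back to a single copy of $j$'s out-edges, and the $j^1 \to j^2$ edges would become loops at $j$ again). As written, your concluding step is therefore circular. Note also that your parenthetical ``absorbing $j^1$'s single incoming edge and its out-bundle into the description of $i$'s edges'' is not an in-amalgamation at all --- it is verbatim the collapse of $j^1$. So the correct tool appears in your own text (and collapse is applied forwards here, not ``in reverse''); once Theorem~\ref{collapse} is substituted for the in-amalgamation, your argument coincides with the paper's proof.
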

\begin{proof}
In terms of graphs, what we are doing is choosing two vertices in the graph $G_A$, $u$ and $v$ say, with an edge, $f$ say, from $u$ to $v$. 
A new graph, $E$, is then formed by removing $f$ but adding for each edge $e \in s^{-1}(v)$ an edge $\bar{e}$ with $s(\bar{e}) = u$ and $r(\bar{e}) = r(e)$. 
We claim that $E \Meq G_A$ if $v$ is not an infinite emitter. 

Since $v$ is not singular, we can use move {\tt (I)} at it. 
Partition $r^{-1}(v)$ as $\mathcal{E}_1 = \{ f \}$ and $\mathcal{E}_2 = r^{-1}(v) \setminus \{ f \}$. 
As there is a loop of length one based at $v$. $\mathcal{E}_2$ is not empty. 
Insplitting according to this partition replaces $v$ with two new vertices, $v_1$ and $v_2$. 
The vertex $v_1$ only receives one edge, and that edge comes from $u$, the vertex $v_2$ receives the edges $v$ received except $f$ and also receives one edge from $v_1$ for each loop of length one based at $v$. 
Both vertices emit copies of the edges $v$ emitted, and do so in such a way that there is no loop of length one based at $v_1$: instead it emits one edge to $v_2$ for each loop of length one based at $v$. 
Collapsing $v_1$ as in Theorem~\ref{collapse} yields $E$. 
\end{proof} 

Above we proved move-equivalence of two matrices and hence we have that their algebras are stably isomorphic. 
It follows from \cite[Corollary 2.5]{aalp_gauge} that if the associated graphs are row-finite then the algebras are isomorphic. 
We now turn to columns. 

\begin{lem} \label{columnMove}
Let $A$ be a matrix with no zeros on the diagonal. 
Let $i,j$ be two distinct indices. 
If $a_{ji} \neq 0$, then 
\[
	A \Meq \begin{pmatrix}	
			a_{11}	& \cdots 	& a_{1i} + a_{1j}		& \cdots 	& a_{1n} \\
			a_{21}	& \cdots 	& a_{2i} + a_{2j}		& \cdots 	& a_{1n} \\
			\vdots		&  		& \vdots 			&            	& \vdots \\
			a_{j1}		& \cdots 	& a_{ji} + a_{jj} - 1		& \cdots	& a_{jn} \\
			\vdots 	& 		& \vdots			& 	       	& \vdots \\
			a_{n1}	& \cdots	& a_{ni} + a_{nj}		& \cdots 	& a_{nn} \\
		\end{pmatrix}.
\]
The matrix on the right is $A$ with the $j$'th column added to the $i$'th column but where we subtract $1$ at the $(j,i)$'th entry.
Note that the matrix on the right has no zeros on the diagonal, and that if the top-left corner was irreducible in $A$ then it is still irreducible on the right-hand side.  
The same holds for all of $A$ and all of the right-hand side.
\end{lem}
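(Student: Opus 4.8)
The plan is to dualize the proof of Lemma~\ref{rowMove}: where that argument used an in-split followed by a collapse to copy the out-edges of the target vertex, here I would use an out-split followed by a collapse to copy the in-edges of the source vertex. In terms of the graph $G_A$ the asserted operation chooses two vertices $u$ and $v$ together with an edge $f$ from $u$ to $v$ (this exists since $a_{ji} \neq 0$; I set $u = j$ and $v = i$), and forms a new graph by deleting $f$ while adding, for every edge $e \in r^{-1}(u)$, an edge $\bar e$ with $s(\bar e) = s(e)$ and $r(\bar e) = v$. Reading off adjacency matrices, deleting $f$ accounts for the $-1$ at the $(j,i)$ entry, the copies coming from in-edges $e$ with source $w \neq u$ add $a_{wj}$ to the entry $a_{wi}$, and the copies coming from the loops at $u$ add $a_{jj}$ to $a_{ji}$; together these give exactly the claimed matrix, and no column other than the $i$-th is altered.

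To realize this operation by moves, I would first out-split $G_A$ at $u$ (Theorem~\ref{outSplit}) with respect to the partition $\mathcal{E}_1 = \{ f \}$, $\mathcal{E}_2 = s^{-1}(u) \setminus \{ f \}$. The hypothesis that $A$ has no zeros on the diagonal provides a loop of length one at $u$, which lies in $\mathcal{E}_2$ and so guarantees $\mathcal{E}_2 \neq \emptyset$; since $\mathcal{E}_1$ is finite, at most one block of the partition is infinite and move~{\tt (O)} applies. The split replaces $u$ by two vertices $u^1, u^2$: the vertex $u^1$ emits a single edge, namely the copy of $f$ (with range $v$), while $u^2$ emits copies of all the other edges $u$ emitted; both receive copies of every edge into $u$, and each loop $g$ at $u$ becomes an edge $g^1$ from $u^2$ to $u^1$ together with a loop $g^2$ at $u^2$. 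In particular $u^1$ is regular, is not a source, and emits the single non-loop edge $f$.

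I would then collapse $u^1$ (Lemma~\ref{easyCollapse}, equivalently Theorem~\ref{collapse}), which composes each edge $e^1$ into $u^1$ with $f$ to produce an edge from $s(e^1)$ to $v$. After renaming $u^2$ back to $u$ this yields precisely the graph described above: the in-neighbours of $u$ (via the non-loop edges $e$) acquire edges to $v$, the loops at $u$ contribute $a_{jj}$ fresh edges from $u$ to $v$, the single copy of $f$ is consumed, and the in-edges of $u$ itself and of every other vertex are untouched. Since out-splitting and collapsing are among our moves, this shows that $A$ is move-equivalent to the matrix in the statement; the preservation of the nonzero diagonal and of irreducibility follows exactly as in Lemma~\ref{rowMove}, since the operation only adds edges and the $(j,i)$ entry stays positive, so the support of the new $i$-th column contains the old one. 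The main thing to get right is the bookkeeping for the loop at $u$ — it is what produces the $+a_{jj}$ correction and forces the diagonal hypothesis — together with the genuine asymmetry with the row case: because move~{\tt (O)} tolerates one infinite block whereas move~{\tt (I)} needs a regular vertex, no finiteness (``no $\infty$'') assumption on $u$ is required here, which is why the hypotheses are weaker than in Lemma~\ref{rowMove}.
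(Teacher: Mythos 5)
Your proposal is correct and follows the paper's own proof essentially verbatim: out-split at the source of $f$ with partition $\mathcal{E}_1 = \{f\}$, $\mathcal{E}_2 = s^{-1}(\cdot)\setminus\{f\}$ (nonempty by the diagonal hypothesis), then collapse the resulting vertex that emits only the copy of $f$. Your bookkeeping of the loop contribution $a_{jj}$, the $-1$ at the $(j,i)$ entry, and the reason no finiteness hypothesis is needed (move {\tt (O)} tolerates one infinite block, unlike move {\tt (I)}) all match the paper; the only difference is the immaterial swap of the labels $u$ and $v$.
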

\begin{proof}
In terms of graphs, what we are doing is choosing two vertices in the graph $G_A$, $u$ and $v$ say, with an edge, $f$ say, from $v$ to $u$. 
A new graph, $E$, is then formed by removing $f$ but adding for each edge $e \in r^{-1}(v)$ an edge $\bar{e}$ with $s(\bar{e}) = s(e)$ and $r(\bar{e}) = u$. 
We claim that $E \Meq G_A$. 

Partition $s^{-1}(v)$ as $\mathcal{E}_1 = \{ f \}$ and $\mathcal{E}_2 = s^{-1}(v) \setminus \{ f \}$. 
Since there is a loop of length one based at $v$, $\mathcal{E}_2$ is not empty, so we can use move {\tt (O)}. 
Doing so yields a graph just as $G$ but where $v$ is replaced by two vertices, $v_1$ and $v_2$.
The vertex $v_1$ receives a copy of everything $v$ did including an edge from $v_2$ for each loop of length one based at $v$, it emits only one edge, and that edge has range $u$. 
The vertex $v_2$ also receives a copy of everything $v$ did, and it emits everything $v$ did, except $f$. 
Since $v_1$ is regular and not the base of a loop of length one, we can collapse it (Theorem~\ref{collapse}) and thereby obtain $E$.
\end{proof}

Following Franks, we use the above to do matrix operations (see \cite[Theorem 2.4]{franks_flowequivalence}). 

\begin{prop} \label{matrixOperations}
Suppose we are given an irreducible matrix $J_{nm} + B$ where the top-left $m \times m$ corner is irreducible and the last $n-m$ rows of $B$ are zero. 
If we form $B'$ from $B$ by adding any column to any other column, or by adding a non-zero row to any other non-zero row, then $J_{nm} + B \Meq J_{nm} + B'$.
\end{prop}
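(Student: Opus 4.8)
The plan is to deduce everything from Lemmas~\ref{rowMove} and~\ref{columnMove}, whose only defects are that each produces the desired addition together with a spurious $-1$ at one entry, and that each requires a nonzero ``seed'' entry to get started. The first observation is that for matrices of the form $J_{nm}+B$ these defects disappear. Every such matrix has no zeros on the diagonal (the regular diagonal entries are $1+b_{ii}\geq 1$ and the singular ones are $\infty$), so both lemmas apply. Fix $i\neq j$ and consider adding column $j$ of $B$ to column $i$. If $j\leq m$, then column $j$ of $A=J_{nm}+B$ equals column $j$ of $B$ plus the standard basis vector $e_j$ in the regular rows plus $\infty$ in the singular rows; applying Lemma~\ref{columnMove} adds this whole column to column $i$ but subtracts $1$ at $(j,i)$, and one checks that the contribution of $e_j$ at $(j,i)$ is exactly cancelled by the $-1$, while the extra $\infty$ in the singular rows is absorbed since column $i$ already carries $\infty$ there. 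If $j>m$ the correction is likewise swallowed by the entry $a_{jj}=\infty$. Either way, provided the seed $a_{ji}$ is nonzero, Lemma~\ref{columnMove} gives precisely $J_{nm}+B\Meq J_{nm}+B'$. The row case is identical via Lemma~\ref{rowMove}, the restriction to non-zero rows guaranteeing that we work with the regular rows $j\leq m$, which contain no $\infty$ as that lemma requires.

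It therefore remains to remove the hypothesis that the seed entry is nonzero, and this is where the irreducibility of $J_{nm}+B$ enters; I expect this to be the main obstacle. For a column addition the seed $a_{ji}$ records an edge $j\to i$ in $G_A$, and if it vanishes (which forces $j\leq m$) I would argue by induction on the length of a shortest path $j\to i$, which exists by irreducibility. Writing $k$ for the second vertex of such a path, so that $a_{jk}\neq 0$ and there is a strictly shorter path $k\to i$, the induction hypothesis lets me add column $k$ to column $i$; call the result $M_1$. In $M_1$ the $(j,i)$ entry has become $a_{jk}>0$, so the base case applies and adds column $j$ to column $i$, producing a matrix $M_2$. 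Setting $M_3=J_{nm}+B'$ for the target, one sees that adding column $k$ to column $i$ in $M_3$ again produces $M_2$, since additions to a single column commute; and this last move is itself legitimate by the induction hypothesis, because adding a column only creates edges and so preserves the path $k\to i$. Chaining $J_{nm}+B\Meq M_1\Meq M_2\Meq M_3$ then yields $J_{nm}+B\Meq J_{nm}+B'$ without ever having to literally subtract a column. The row version runs the same way, using that the top-left $m\times m$ corner is irreducible so that all intermediate paths stay among the regular vertices, and hence every row we add is a non-zero row containing no $\infty$.

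Finally, I would note throughout that the class of matrices under consideration is stable under these moves: Lemmas~\ref{rowMove} and~\ref{columnMove} preserve both irreducibility (of the whole matrix and of the top-left corner) and the absence of zeros on the diagonal, column operations do not disturb the vanishing of the last $n-m$ rows of $B$, and the row operations we allow only combine regular rows. Hence each intermediate matrix again has the form $J_{nm}+(\text{new }B)$ to which the lemmas reapply, so the repeated use of the induction hypothesis is justified. The genuine content is entirely in the inductive seed-creation argument of the previous paragraph; once that is in place the proposition follows by applying it once for each requested column or row addition.
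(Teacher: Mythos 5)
Your proposal is correct, and it rests on the same foundation as the paper's proof---Lemmas~\ref{rowMove} and~\ref{columnMove} together with irreducibility---but it closes the key zero-seed case by a genuinely different device. The paper, like you, first notes that a ``singular'' column can always be added (its seed entry is $\infty$); when the seed vanishes it picks a path of nonzero entries joining the two indices, successively \emph{adds} each intermediate column to the target column (each addition creating the seed for the next), and then \emph{subtracts} the intermediate columns back off, reading Lemma~\ref{columnMove} in reverse; this requires each subtraction to leave a non-negative matrix with a still-positive seed, side conditions the paper states and the telescoping order guarantees. You instead run a strong induction on the length of a shortest path and connect both $J_{nm}+B$ and the target $J_{nm}+B'$ to a common matrix $M_2$ by forward additions only, using that additions to distinct columns of $B$ commute and that additions only enlarge the graph, so the path hypothesis persists in the target. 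The trade is clear: the paper's telescoping is shorter and produces one explicit chain of moves, but leans on inverse moves with positivity checks; your argument never inverts a move, at the cost of quantifying the induction over all matrices in the class and verifying (as you do) that the class is stable under the moves. Two further points in your favor: your explicit check that the $-1$ correction is absorbed by the $J_{nm}$ part ($e_j$ in a regular row, $\infty$ in a singular one) is left implicit in the paper, and your convention---adding column $x$ to column $y$ needs a path from $x$ to $y$---matches Lemma~\ref{columnMove} as stated, whereas the paper's own write-up transposes the indices at one point. One small gap in exposition only: for rows the seed for adding row $j$ to row $i$ sits at $(i,j)$, so the induction must peel the \emph{last} edge of the path rather than the first; your claim that the row case ``runs the same way'' is sound (with the top-left $m\times m$ corner's irreducibility keeping all intermediate rows regular, hence free of $\infty$), but this reversal deserves a sentence.
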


Clearly adding a zero row to any other row changes nothing, and neither does adding any row to one of the zero rows since we add the matrix $J_{nm}$ to $B$ when we form $A$, so every entry of the last $n-m$ rows will be $\infty$. 
We have added the requirement, since there is very little gained by not doing so, and, more importantly, this is the way we intend to use it.  

\begin{proof}[Proof of Proposition~\ref{matrixOperations}.]
First notice that we can add a ``singular'' column to any other column by Lemma~\ref{columnMove}.

Let $A = J_{nm} + B$ and suppose we are given two distinct indices $i,j$ such that $j \leq m$.
Let $B'$ be the matrix obtained by adding the $j$'th column of $B$ to the $i$'th.
If $a_{ji}$ is non-zero then $J_{nm} + B \Meq J_{nm} + B'$ by Lemma~\ref{columnMove}.
Hence applying  Lemma~\ref{columnMove} shows that for any two distinct indices $l,k$ we can subtract row $l$ in $B$ from row $k$ in $B$ without changing move-equivalence class, provided that the resulting matrix is non-negative and $b_{kl} > 0$ after the subtraction.

Any two vertices in $G_A$ are connected, so we can find a sequence of distinct indices $j = i_0, i_1, \ldots, i_k = i$ such that the entries $a_{i_{l}i_{l+1}}$ all are non-zero. 
Since $a_{ji_{1}}$ is non-zero we can add column $i_{1}$ to column $j$ (in $B$). 
The new column $j$ will be non-zero at entry $a_{ji_{2}}$, so we can add column $i_{2}$ to the new column $j$. 
Carrying on, we will eventually add column $i$ to column $j$.
The version of column $j$ we have now is a sum of the columns $j = i_0, i_1, \ldots, i_k = i$ from $B$. 
We can now subtract column $i_{k-1}$ from column $j$, then we subtract column $i_{k-2}$ and so on. 
In the end, we will have that we have only added column $i$ to column $j$, as we wanted. 

The proof for rows is very similar. 
The only change, is that we need to use that the regular vertices in $G_A$ are connected, so we only work with non-zero rows of $B$. 
But that is the case, since the top-left $m \times m$ corner of $A$ is irreducible. 
\end{proof}

\section{The standard form}

We will now, following Franks, put our matrices in a standard form.
There are two cases, one where the matrices have ``singular'' rows, and one where they do not. 

\subsection{Graphs with at least one singularity}

In this subsection we consider the case where we are given a matrix $A$ of the form $A = J_{nm} + B$ with $m \neq n$ and the last $n-m$ rows of $B$ filled with zeros. 
We copy the approach Franks takes in the last part of section $2$ and all of section $3$ in \cite{franks_flowequivalence}. 
Like Franks does, we first show that we can enlarge $B$ without changing move-equivalence class.
Moreover, we can, unlike Franks, get the determinant of the top-left regular corner of the enlarged matrix to be $0$, this is the reason we will not need the Cuntz splice later.
The key point is that we can adjoin almost any row to $A$ without changing the $\Meq$-class. 

\begin{lem} \label{adjoinRow}
Suppose we are given an irreducible matrix $A = J_{nm} + B$ where the top-left $m \times m$ corner is irreducible,  $m \neq n$ and the last $n - m$ rows of $B$ are filled with zeros. 
Let $x_1, x_2, \ldots,x_n$ be non-negative integers that are not all zero and let $j$ with $m < j \leq n$ be given.
If
\[
	A' = 	\begin{pmatrix}
			x_j		& x_1		& \cdots 	& x_{j-1}	& x_j		& x_{j+1}	& \cdots & x_n \\
			a_{1j}		& a_{11} 	& \cdots 	& a_{1(j-1)}	& a_{1j}	& a_{1(j+1)}	& \cdots & a_{1n} \\
			\vdots 	& \vdots 	& \ddots 	& \vdots 	& \vdots 	& \vdots 	& \ddots & \vdots \\
			a_{mj}		& a_{m1}	& \cdots 	& a_{m(j-1)}	& a_{mj}	& a_{m(j+1)}	& \cdots & a_{mn} \\	
			\infty 		& \infty	& \cdots 	& \infty	& \infty	& \infty 	& \cdots & \infty \\
			\vdots 	& \vdots 	& \ddots 	& \vdots 	& \vdots 	& \vdots 	& \ddots & \vdots \\
			\infty 		& \infty	& \cdots 	& \infty	& \infty	& \infty 	& \cdots & \infty 
		\end{pmatrix},
\]
then $A \Meq A'$. 
\end{lem}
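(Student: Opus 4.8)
The plan is to realise $A'$ as the adjacency matrix of a single out-split of $G = G_A$ at the vertex $j$, so that $A \Meq A'$ becomes an immediate consequence of move {\tt (O)}. Since $m \neq n$ and $m < j \le n$, row $j$ of $A = J_{nm} + B$ is a singular row, so $a_{jk} = \infty$ for every $k$; thus $j$ is an infinite emitter that sends infinitely many edges to every vertex (itself included) and is in particular not a sink.

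I would then out-split $G$ at $j$ along the partition of $s^{-1}(j)$ given by $\mathcal{E}_1 = \{\, x_k \text{ of the edges } j \to k : k = 1, \dots, n \,\}$ (for $k = j$ these are $x_j$ of the self-loops at $j$) and $\mathcal{E}_2 = s^{-1}(j) \setminus \mathcal{E}_1$. This choice is legitimate: $j$ emits infinitely many edges to each vertex, so the requisite finitely many edges can be selected; the hypothesis that the $x_k$ are not all zero forces $\mathcal{E}_1 \neq \emptyset$; and $\mathcal{E}_1$ is finite while $\mathcal{E}_2$ is infinite, so at most one class is infinite and Theorem~\ref{outSplit} gives $C^*(G) \cong C^*(G_{os})$, hence $G \Meq G_{os}$.

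It then remains to identify $G_{os}$ with $G_{A'}$, relabelling the two vertices $v^1, v^2$ that replace $j$ as the new index $0$ and the index $j$ respectively. Out-splitting duplicates every edge into $j$, so each vertex $k \neq j$ that emitted $a_{kj}$ (or $\infty$) edges to $j$ now emits that many to both $v^1$ and $v^2$; this is exactly the statement that columns $0$ and $j$ of $A'$ are identical copies of the old $j$-th column. On the emitting side $v^2$ carries $\mathcal{E}_2$ and so still emits infinitely to everything, reproducing the all-$\infty$ row $j$ of $A'$, while $v^1$ emits the edges of $\mathcal{E}_1$, in particular $x_k$ edges to each $k \neq j$.

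The one point that needs genuine care -- and the only subtlety I foresee -- is the fate of the $x_j$ self-loops at $j$ placed in $\mathcal{E}_1$. Under the out-split each such loop $e$ splits into $e^1, e^2$ with source $v^1$ and ranges $v^1, v^2$, contributing one edge $v^1 \to v^1$ and one edge $v^1 \to v^2$. Hence $v^1$ acquires precisely $x_j$ self-loops together with $x_j$ edges to $v^2$, which is exactly why both the $(0,0)$ and the $(0,j)$ entries of $A'$ equal $x_j$. Checking that the self-loops in $\mathcal{E}_2$ and all remaining edges fill in the other entries as prescribed then completes the isomorphism $G_{os} \cong G_{A'}$ and yields $A \Meq A'$.
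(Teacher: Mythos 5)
Your proposal is correct and is essentially the paper's own proof: the paper also partitions $s^{-1}(u_j)$ into a finite set containing exactly $x_k$ edges to $u_k$ for each $k$ (possible since $u_j$ emits infinitely many edges to every vertex) and its infinite complement, then applies move {\tt (O)}. The only difference is that you spell out the bookkeeping---nonemptiness of $\mathcal{E}_1$, at most one infinite class, and the splitting of the $x_j$ self-loops into the $(1,1)$ and $(1,j{+}1)$ entries---which the paper compresses into the single assertion that the out-split has adjacency matrix $A'$.
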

\begin{proof}
Let $u_1, u_2, \ldots u_n$ be the vertices of $G_A$, the labeling chosen so that the $i$'th row and column of $A$ describes the edges going out of and into, respectively, $u_i$. 
Since $j > m$, $u_j$ is an infinite emitter, and as $A = J_{mn} + B$, $u_j$ emits infinitely many edges to every vertex in $G_A$. 
Partition $s^{-1}(u_j)$ into two sets, one containing $x_1$ edges to $u_1$, $x_2$ edges to $u_2$ and so on, the other containing everything else. 
Out-splitting according to this partition produces a graph with adjacency matrix $A'$.
Hence $A' \Meq A$. 
\end{proof}

\begin{prop} \label{asBigAsYouLike}
Suppose we are given an irreducible matrix $A = J_{nm} + B$ where the top- left $m \times m$ corner is irreducible, $m \neq n$ and the last $n - m$ rows of $B$ are filled with zeros. 
For any $k \in \N$ such that $k \geq n+2$ we can find a $k \times k$ matrix $C$, satisfying the following:
\begin{enumerate}[(i)]
	\item All the entries in the first $k-(n-m)$ rows of $C$ are non-negative, 
	\item at least one of the entries in $C$ is 1,
	\item all the entries in the last $n-m$ rows of $C$ are zero, and,
	\item $J_{nm} + B \Meq J_{k(k - (m-n))} + C$.
\end{enumerate}
\end{prop}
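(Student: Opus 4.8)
The plan is to reach the $k \times k$ matrix by adjoining one regular row at a time via Lemma~\ref{adjoinRow}. Since $m \neq n$, the matrix $A = J_{nm} + B$ keeps at least one singular row no matter how often we enlarge it, so there is always a singular index $j$ with $m < j$ at which the lemma applies; and since $k \geq n + 2$, I will carry out $k - n \geq 2$ such steps. Each step replaces the current matrix by an irreducible matrix of size one larger, again of the form $J + (\cdot)$ and with the same number $n - m$ of singular rows: the adjoined row is regular (finite entries), while the adjoined first column is a copy of the singular column $j$, so the singular rows remain identically $\infty$. After $k - n$ steps I obtain a $k \times k$ matrix $J_{k(k-(n-m))} + C$ with $J_{nm} + B \Meq J_{k(k-(n-m))} + C$, which gives (iv).

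Conditions (i) and (iii) are then forced by the shape of the construction. The singular rows of every intermediate matrix are identically $\infty$, matching $J$, so the corresponding rows of $C$ vanish, giving (iii); and in each regular row the diagonal of $J$ equals $1$ while the added part is non-negative, so every regular row of $C$ is non-negative, giving (i). For (ii) I plant a $1$ on the final application: Lemma~\ref{adjoinRow} lets me choose the non-negative vector $(x_1,\dots,x_n)$ freely, so I take $x_j = 1$, which keeps the new diagonal entry $x_j - 1$ of $C$ non-negative, and $x_1 = 1$. The value $x_1 = 1$ sits in the off-diagonal slot $(1,2)$ of the new first row, where $J$ contributes $0$, so $C$ has an entry equal to $1$.

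The main obstacle is to keep the hypotheses of Lemma~\ref{adjoinRow} valid throughout, namely that each enlarged matrix is irreducible with irreducible top-left regular corner. Overall irreducibility is easy: the new vertex $u_0$ inherits every incoming edge of $u_j$ and, since I always choose some $x_i > 0$, it reaches a vertex from which all of the graph is reachable. The delicate part is the regular corner, because $u_0$ lands in it and must connect to the other regular vertices using regular vertices only. Outgoing connection is arranged by taking $x_1 = 1$, so that $u_0$ points to the regular vertex $u_1$. For the incoming connection I need some regular vertex to emit directly to $u_j$, i.e.\ column $j$ to be non-zero in a regular row; I would secure this by choosing $j$ to be the target of the first edge that leaves the regular vertices along a path from a regular vertex to a singular one, which exists because the whole graph is strongly connected. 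One checks that this witnessing edge is merely duplicated by the out-split, so an admissible index $j$ is available at every step, and the case of a single regular vertex is handled directly by the self-loop coming from $x_j \geq 1$.
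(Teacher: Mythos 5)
Your argument is correct for the statement as literally written, and it runs on the same engine as the paper's proof, namely repeated application of Lemma~\ref{adjoinRow}; but the two proofs organize the induction differently. The paper first uses one non-zero entry of $B$ and the row and column additions of Proposition~\ref{matrixOperations} to replace $B$ by a matrix whose first $m$ rows are \emph{strictly positive}, and only then adjoins rows, always of the fixed shape $(2,1,\ldots,1,2,1,\ldots,1)$ with the second $2$ in the singular column $m+1$. After this positivization the enlarged regular corner is strictly positive, hence automatically irreducible, so none of your delicate bookkeeping (choosing $j$ to be a singular vertex receiving an edge from a regular vertex, setting $x_1=1$, tracking the duplicated witnessing edge through the out-split) is needed; moreover the last adjoined row appears in $C$ as $(1,1,\ldots,1,2,1,\ldots,1)$, which supplies the entry equal to $1$. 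Your route trades that global simplification for a local invariant, maintaining irreducibility of the regular corner by hand at every step; this is more delicate but does check out, and your treatment of the base case $m=0$ coincides with the paper's (a single application of Lemma~\ref{adjoinRow} creating a non-zero $1\times 1$ regular corner).

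One substantive warning: in context, ``non-negative'' in (i) is a typo for ``non-zero''. The next links in the chain --- Proposition~\ref{aColumnOfOnes}, Theorem~\ref{standardForm}, and the proof of Theorem~\ref{mainThm} --- all assume that every entry in the regular rows is non-zero, and the paper's proof of the present proposition is engineered to deliver exactly that. Your construction never touches the original rows of $B$, so any zeros there survive into $C$, and your $C$ need not satisfy the hypotheses of Proposition~\ref{aColumnOfOnes}. The repair is precisely the step you skipped: first positivize with Proposition~\ref{matrixOperations}, then run your adjoining scheme with all $x_i \geq 1$, which incidentally also makes your irreducibility maintenance trivial.
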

\begin{proof}
If $m = 0$, we can use Lemma~\ref{adjoinRow} to replace $A$ with an irreducible $(n+1) \times (n+1)$ matrix with one ``regular'' row, and the regular top-left $1 \times 1$ corner irreducible, i.e. non-zero. 

Suppose now that $m > 0$ and that $k \geq n+2$ is given. 
Since $A$ is irreducible, there is at least one non-zero entry of $B$. 
Using that entry and row and column additions (Proposition~\ref{matrixOperations}), we can find a matrix $A' = J_{nm} + B'$, such that $A' \Meq A$ and every entry in first $m$ rows of $B$ are non-zero and every entry in the last $n-m$ rows are $0$.
We can now use Lemma~\ref{adjoinRow} to add rows of the form $(2,1,1,1\ldots,1,2,1,1,\ldots,1)$, where the second $2$ is at the $(m+1)$'st entry, to $A'$ until we have a $k \times k$ matrix $A''$. 
Since $k \geq n+2$ we add at least one such row (in the case where the original matrix had $m \neq 0$, we actually add at least two such rows). 
We have that $A'' \Meq A$, that $A'' = J_{k(k-(m-n))} + C$ for some matrix $C$ with all the entries in the first $k-(n-m)$ rows non-zero, all the entries in the last $n-m$ rows zero, and one row that look like $(1,1,\ldots, 1,1,2,1,1,\ldots, 1)$, with the $2$ in a singular column. 
In particular $C$ contains a $1$. 
\end{proof}

Next we show that we can get a column of $1$'s. 

\begin{prop} \label{aColumnOfOnes}
Suppose we are given a matrix $A = J_{nm} + C$ where $m \neq n$. 
Suppose all the entries in the first $m$ rows of $C$ are non-zero, all the entries in the last $n-m$ rows are zero, and $C$ has $1$ in at least one entry. 
There exists an $n + 2 \times n + 2$ matrix $D$ such that:
\begin{enumerate}[(i)]
	\item The last $(n+2)-(m+2)$ rows of $D$ are identically zero, 
	\item any entry in any other row is strictly positive,
	\item $d_{i1} = 1$ for all $1 \leq i \leq m+2$,
	\item the determinant of the top-left $m+2 \times m+2$ corner of $D$ is zero, and, 
	\item $J_{nm} + C \Meq J_{(n+1)(m+1)} + D$.
\end{enumerate} 
\end{prop}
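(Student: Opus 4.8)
The plan is to reduce everything to the single task of creating a column of ones in the $C$-part, exploiting that, unlike Franks, we may freely adjoin rows. The key observations are: two equal columns in the top-left $(m+2)\times(m+2)$ block of $D$ force its determinant to vanish (condition (iv)), while making the first column a column of ones gives condition (iii); so it suffices to arrange that the first two columns of $D$ are both $(1,1,\dots,1)^{T}$ on the regular rows and zero on the singular rows. The enlargement is powered by Lemma~\ref{adjoinRow}: since $m\neq n$ there is an infinite emitter, and out-splitting it adjoins an essentially arbitrary regular row and a duplicate column without leaving the $\Meq$-class. This freedom, unavailable in Franks' setting, is exactly what will produce the dependency that kills the determinant, which is why no Cuntz splice is needed in this case.

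Granting a singular column $j$ of $C$ whose regular entries are all equal to $1$, the two columns of ones can be produced cleanly. Applying Lemma~\ref{adjoinRow} to that column with leading entry $2$, the new regular column equals $(2,1,\dots,1)^{T}$ before the $J_{(n+2)(m+2)}$ term is removed; subtracting the diagonal $1$ of $J$ turns it into $(1,1,\dots,1)^{T}$, a column of ones in $D$. A second application of Lemma~\ref{adjoinRow} to the same (still singular, still all-ones) column, with the adjacent free entry of the new row chosen to be $1$, shifts the first all-ones column into the second coordinate and again lands it on $(1,1,\dots,1)^{T}$ after the $J$-correction. Choosing every remaining new-row entry to be a positive integer keeps the two adjoined rows strictly positive, while the singular rows stay zero throughout. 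Thus two applications of Lemma~\ref{adjoinRow} deliver (i), (ii), (iii) and (iv) at once, and the proposition reduces to the existence of the singular all-ones column.

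To produce that column I would use in-splitting (Theorem~\ref{inSplit}), the move that prescribes incoming edges, together with the column operations of Proposition~\ref{matrixOperations}: because the regular block of $C$ is strictly positive, every regular vertex emits at least one edge to a chosen vertex, so after inflating its in-edges by column additions one can split off a vertex receiving exactly one edge from each regular vertex. The guaranteed $1$-entry of $C$ and the ability to inflate give the room to carry this out while preserving strict positivity of all regular rows and the vanishing of the singular rows.

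The hard part will be constructing the singular all-ones column itself --- equivalently, arranging that some infinite emitter receives exactly one edge from every regular vertex. In-splitting is the natural device for this, but it creates regular vertices and distributes the self-loops at the split vertex uniformly, so a column built directly this way carries a spurious value on its own diagonal row; placing the all-ones pattern in a \emph{singular} column is precisely what makes that diagonal entry fall in a singular (hence zero) row and disappear. Making this precise --- combining in-splitting with Proposition~\ref{matrixOperations}, tracking the self-loops, and keeping every regular row strictly positive --- is the crux of the argument. Once it is done I would reorder so that an all-ones column comes first, read off $D$, and record $A \Meq J_{(n+2)(m+2)}+D$; the $J_{(n+1)(m+1)}$ in the statement appears to be a misprint for $J_{(n+2)(m+2)}$, the size forced by $D$ being $(n+2)\times(n+2)$.
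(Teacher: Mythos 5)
Your high-level skeleton---enlarge via Lemma~\ref{adjoinRow} twice so as to plant a column of ones and force a rank drop in the regular corner, with no Cuntz splice needed---matches the paper, and you correctly spotted the misprint: the proof indeed ends with a matrix of the form $J_{(n+2)(m+2)} + D'$. But there are two genuine gaps. First, the step you defer as ``the crux''---producing an all-ones column in the first place---is exactly what the paper settles at the outset, and your proposed device for it fails: you want some infinite emitter to receive exactly one edge from every regular vertex and suggest in-splitting, but move {\tt (I)} (Theorem~\ref{inSplit}) only yields stable isomorphism when the split vertex is \emph{regular}, so you cannot in-split the infinite emitter, and in-splitting a regular vertex creates only regular vertices, hence regular columns---never the singular all-ones column your plan requires. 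The paper instead quotes Franks' algorithm \cite[Proposition 2.9]{franks_flowequivalence}: since $C$ contains a $1$ (this is precisely where that hypothesis enters) and its regular rows are strictly positive, the row and column additions licensed by Proposition~\ref{matrixOperations} already produce a column equal to $1$ in each of the first $m$ entries, at an arbitrary index $j_0$, regular or singular; the case split $j = \max\{j_0, m+1\}$ in the application of Lemma~\ref{adjoinRow} then handles either position. No in-splitting, and no \emph{singular} all-ones column, is ever needed.

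Second, even granting your singular all-ones column, the two-duplication step has an arithmetic flaw. In Lemma~\ref{adjoinRow} the top entry of the duplicated column and the entry of the new row landing in the old column $j$ are the \emph{same} number $x_j$; to make the new regular column read $(1,\dots,1)^T$ in the $D$-part you are forced to take $x_j = 2$, whereupon the old singular column acquires a $2$ as its new top entry and is no longer all-ones---your parenthetical ``still all-ones'' is false. Its second duplication therefore yields $(x'-1, 2, 1, \dots, 1)^T$ rather than $(1,\dots,1)^T$, so the first two columns of $D$ are not equal and (iv) is not achieved; duplicating a different singular column instead does not help unless you had already manufactured a second all-ones singular column. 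The paper sidesteps this by aiming for two equal \emph{rows} rather than columns: the adjoined rows $(2,1,\dots,1,2,1,\dots,1)$ and $(3,1,\dots,1,3,1,\dots,1)$ carry their doubled entries in singular columns, and the second duplication index is chosen to be the singular column holding the $2$ of the first new row, so the prepended entry is $2$ and the top two rows of $D'$ agree on the regular block, killing the determinant of the $(m+2)\times(m+2)$ corner; a final transposition of the first two vertices then puts the all-ones column first.
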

\begin{proof}
We can use the algorithm described by Franks in \cite[Proposition 2.9]{franks_flowequivalence} to obtain an $n \times n$ matrix $C'$ with the last $n-m$ rows identically zero, any entry in any other row non-zero, a column consisting only of $1$'s in the first $n$ places, and such that $J_{nm} + C \sim J_{nm} + C'$.
If the column of $1$'s described the edges going into a regular vertex, we would just be a permutation away from having it be the first column.
To ensure that this happens we will use Lemma~\ref{adjoinRow}. 
Let $j_0$ be the index of the column consisting only of $1$'s and let $j = \max\{j_0, m+1\}$. 
Then $j > m$ so we can use Lemma~\ref{adjoinRow} to add the row $(2,1,1,\ldots,1,2,1,\ldots,1)$ to $A' = J_{nm} + C'$, thereby getting a matrix $A'' = J_{(n+1)(m+1)} + C''$.
If $j = j_0$ then we now have that the first column of $C''$ consist entirely of $1$'s. 
If $j = m+1$ then some regular column consists entirely of $1$'s, since the added $2$ in $C''$ will be in a singular column. 
So after a permutation the first column of $C''$ consists entirely of $1$'s. 
Note that $C''$ now satisfies $(i), (ii), (iii)$ and $(v)$, and that the top row of $C''$ is $(1,1,\ldots, 1,2,1,\ldots, 1)$ with the $2$ in a singular column. 
Using Lemma~\ref{adjoinRow} we can add row of the form $(3,1,1,\ldots,1,3,1,\ldots,1)$ to $A''$ resulting in $A''' = J_{(n+2)(m+2)} + D'$, where $D'$ satisfies $(i), (ii),$ and $(v)$, and the regular part of the two top rows of $D'$ are $(2,1,\ldots, 1)$ and the second column of $D$ consists of $1$s.
Hence the top-left $(m+2) \times (m+2)$ corner of $D'$ has two identical rows, and therefore has determinant zero. 
Preforming a permutation on $D'$ that interchanges the second and first rows, we get a matrix $D$ that satisfies $(i), (ii), (iii), (iv),$ and $(v)$. 
\end{proof}

We can now put the matrices in a canonical form.
We will argue by induction on the rank of the matrix. 

\begin{lem} \label{basisRankOne}
Suppose we are given an $n \times n$ matrix $C$ where all the entries in the first $m$ rows are non-zero, all the entries in the last $n-m$ rows are zero, and the first column is $(d,d,\ldots, d, 0, 0, \ldots, 0)^T$, where $d$ is the $\gcd$ of the non-zero entries of $C$.
If $C$ is rank $1$, then 
\[
	J_{mn} + C \sim J_{mn} + D,
\]
where $D$ is the $n \times n$ matrix where all the entries in the first $m$ rows are $d$, and all the entries in the last $n-m$ rows are zero.
\end{lem}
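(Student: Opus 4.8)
The plan is to exploit the rank-one hypothesis to pin down the shape of $C$ completely, and then to reach $D$ using only the column additions licensed by Proposition~\ref{matrixOperations}. First I would record what rank $1$ together with the prescribed first column forces. Since every column of $C$ is a scalar multiple of the first column $(d,\dots,d,0,\dots,0)^T$, and the first $m$ rows are required to be non-zero while the last $n-m$ rows vanish, the $j$-th column must have the form $(c_j,\dots,c_j,0,\dots,0)^T$ with $c_j$ a positive integer and $c_1 = d$. Because $d$ is the $\gcd$ of the non-zero entries of $C$, we have $d \mid c_j$ for every $j$, so I can write $c_j = d\,k_j$ with $k_j \ge 1$ and $k_1 = 1$. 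Note also that rank $1$ forces $C \neq 0$, hence $m \ge 1$, so the regular corner is non-empty. The target $D$ is exactly the matrix obtained by replacing each $c_j$ by $d$, i.e. by forcing every $k_j = 1$.

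The engine would be Proposition~\ref{matrixOperations}: throughout the chain every intermediate matrix will be of the form $J_{nm} + C'$ with $C'$ having strictly positive first $m$ rows and vanishing last $n-m$ rows, so its top-left $m \times m$ corner (being entrywise positive) is irreducible and the whole matrix is irreducible, which is precisely what Proposition~\ref{matrixOperations} requires. Adding column $1$ of $C'$ to column $j$ of $C'$ increases $c_j$ by $d$ and leaves everything else untouched; since this is a legal move between two matrices of the required form, its reverse, namely subtracting column $1$ from column $j$, also preserves $\Meq$ by symmetry of the relation, provided the smaller matrix is still non-negative and of the correct form. This is where the divisibility $c_j = d\,k_j$ pays off: subtracting column $1$ from column $j$ exactly $k_j - 1$ times walks $c_j$ down through the values $d\,k_j, d(k_j-1),\dots, 2d, d$, and every intermediate value is a positive multiple of $d$, so no regular entry ever becomes zero and the hypotheses of Proposition~\ref{matrixOperations} survive at each step.

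Carrying this out for each $j = 2,\dots,n$ in turn reduces every $c_j$ to $d$ without disturbing the already-processed columns, since column $1$ itself is never altered; the resulting $C'$ then has all first $m$ rows equal to $(d,\dots,d)$ and all last $n-m$ rows zero, that is, $C' = D$. Hence $J_{nm} + C \Meq J_{nm} + D$, as claimed. The one point I expect to need genuine care, and which I regard as the main obstacle, is the bookkeeping that legitimizes the reductions: each subtraction must be read as the reverse of a valid forward column addition between two admissible matrices (both non-negative, with last $n-m$ rows zero and irreducible top-left corner), so that symmetry of $\Meq$ applies. The divisibility of each $c_j$ by $d$ is exactly what keeps the intermediate matrices admissible and prevents any positive regular entry from collapsing to zero mid-reduction.
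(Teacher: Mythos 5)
Your proposal is correct and follows essentially the same route as the paper: the paper's proof likewise observes that rank $1$ forces every column to be a multiple of $(d,\dots,d,0,\dots,0)^T$ and then invokes column subtraction via Proposition~\ref{matrixOperations} to reach $D$. Your extra bookkeeping---writing $c_j = d\,k_j$, checking that each subtraction is the reverse of a legal column addition between admissible irreducible matrices, and noting no regular entry collapses to zero---is exactly the detail the paper leaves implicit.
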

\begin{proof}
Since $C$ is rank $1$ and integer valued, each column is a multiple of $(d,d,\ldots, d,0, 0, \ldots, 0)^T$. 
We can use column subtraction (Proposition~\ref{matrixOperations}) to form $D$. 
\end{proof}

\begin{thm} \label{standardForm}
Suppose we are given a matrix $A = J_{nm} + C$ where $m \neq n$. 
Suppose all the entries in the first $m$ rows of $C$ are non-zero, all the entries in the last $n-m$ rows are zero, and the first column is $(d,d,\ldots, d, 0, 0, \ldots, 0)^T$, where $d$ is the $\gcd$ of the non-zero entries of $C$.
Let $d_1, d_2, \ldots, d_k$ denote the non-zero elementary divisors of the first $m$ rows of $C$ order so that $d_i$ is a factor of $d_{i+1}$. 
Define 
\[
	B =
	\begin{pmatrix}
		0 		& 0 		& 0 		& \cdots 	& 0 & d_k & d_k & \cdots& d_k \\
		d_1 		& 0 		& 0 		& \cdots 	& 0 & 0 & 0 & \cdots & 0  \\
		0 		& d_2		& 0 		& \cdots 	& 0 & 0 & 0 & \cdots & 0 \\
		0		& 0		& \ddots 	& 		& \vdots & \vdots & \vdots & & \vdots \\
		\vdots 	& \vdots	&		& \ddots 	& \vdots & \vdots & \vdots & & \vdots \\
		0		& 0		& 0		& \cdots	& d_{k-1} & 0 & 0 & \cdots & 0 \\
		\hline
		0		& 0		& 0		& \cdots 	& 0	& d_k & d_k & \cdots & d_k \\				 
		\vdots 	& \vdots 	& \vdots 	& \ddots 	& \vdots & \vdots 	& \vdots & \ddots & \vdots \\
		0		& 0		& 0		& \cdots 	& 0	& d_k & d_k & \cdots & d_k \\
		\hline
0		& 0		& 0		& \cdots 	& 0	& 0 & 0 & \cdots & 0 \\				 
		\vdots 	& \vdots 	& \vdots 	& \ddots 	& \vdots & \vdots 	& \vdots & \ddots & \vdots \\
		0		& 0		& 0		& \cdots 	& 0	& 0 & 0 & \cdots & 0 
	\end{pmatrix}.
\]
There are $m$ rows above the second line and $n-m$ below it.
 
If the ``regular'' top-left corner of $A = J_{nm} + C$ has determinant $0$, then $A \Meq J_{nm} + B$.
\end{thm}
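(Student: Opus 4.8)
The plan is to induct on $k$, the number of non-zero elementary divisors, which is the rank of the first $m$ rows of $C$; this is the induction foreshadowed just before Lemma~\ref{basisRankOne}, whose conclusion supplies the base case $k=1$. Throughout, the only tools are column additions and additions of one non-zero row to another, which are available by Proposition~\ref{matrixOperations}. The one point that must be watched at every step is non-negativity: a subtraction that would produce a negative entry is made legal by first adding a singular column (whose first $m$ entries are positive by hypothesis) enough times to create room, and only then subtracting. Since Proposition~\ref{matrixOperations} permits adding any column to any other, this inflation is always at our disposal, and it is the device that lets us run a Smith-normal-form reduction while staying inside the non-negative integer matrices.

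For the base case $k=1$, Lemma~\ref{basisRankOne} already brings $C$ to the matrix $D$ whose first $m$ rows are identically $d=d_1=d_k$. Since $m\neq n$ there is at least one singular column, and subtracting it from each of the regular columns (Proposition~\ref{matrixOperations}) empties the regular columns while leaving $d_k$ in every singular entry of the first $m$ rows; this is exactly $B$ for $k=1$. For the inductive step I would use the pivot column $(d,\dots,d,0,\dots,0)^T$ to peel off the smallest divisor. Because $d=\gcd$ divides every entry, column operations clear the first row down to the pivot, and additions among the non-zero rows then isolate a single copy of $d_1=d$ at the position $(2,1)$, emptying the rest of column $1$ and of row $2$. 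Deleting that row and column yields an $(n-1)\times(n-1)$ matrix with one fewer regular column whose non-zero elementary divisors are precisely $d_2,\dots,d_k$. The key bookkeeping observation is that the standard form $B$, with its row $2$ and column $1$ removed, is itself the standard form attached to $d_2,\dots,d_k$; since the operations handling the smaller block never touch row $2$ or column $1$, the inductive hypothesis applies to the complementary block, and reinstating the isolated $d_1$ at $(2,1)$ reconstructs all of $B$.

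The hypothesis that the regular corner of $A$ has determinant $0$ is what makes this companion form attainable: it is the condition that lets the largest divisor $d_k$ be folded entirely into the singular columns (appearing in the top row and in the $m-k$ repeated rows) rather than being forced onto the regular diagonal as a determinant term, which is exactly the phenomenon that later removes the need for the Cuntz splice. I expect the main obstacles to be, first, threading the non-negativity through the reduction so that the pivot steps and the inflation/deflation by singular columns genuinely keep every entry non-negative, and second, verifying that the determinant-zero condition is correctly inherited by the complementary block at each stage, so that the $d_k$-folding and the repeated singular rows emerge in exactly the positions prescribed by $B$ with the divisibility $d_i \mid d_{i+1}$ preserved.
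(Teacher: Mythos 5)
Your skeleton coincides with the paper's: induct on the rank $k$ of $C$, take Lemma~\ref{basisRankOne} as the base case, isolate $d_1$ at position $(2,1)$ with the rest of row $2$ and column $1$ empty, and observe that $B$ with row $2$ and column $1$ deleted is the standard form for $d_2,\ldots,d_k$ --- this is exactly the paper's induction. But there are genuine gaps. The first is a misreading of the target form: the $d_k$-block of $B$ occupies columns $k$ through $n$, not merely the singular columns $m+1,\ldots,n$ (compare Theorem~\ref{regularStandard}, which has the identical shape and no singular columns at all, and note the alignment with the subdiagonal entry $d_{k-1}$ in column $k-1$). For $k=1$ the standard form is therefore precisely the matrix $D$ of Lemma~\ref{basisRankOne}, whose first $m$ rows are constantly $d$ in \emph{every} column, and no further step is needed; your extra move of ``emptying the regular columns'' is legal but steers toward a different matrix, and carried through your induction it proves $A \Meq J_{nm} + B''$ for a $B''$ that is not the theorem's $B$. (Your $B''$ happens to be move-equivalent to $B$ by adding a singular column to each of columns $k,\ldots,m$, but you never supply that bridge, since you believe $B''=B$.)

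The more serious gap is that the inductive step is where all the content lies, and your sketch of it does not go through as stated. (i) Your legalization device fails: every singular column is strictly positive in \emph{all} of the first $m$ rows, so padding column $j$ with $t$ copies of a singular column $s$ inflates the $(1,j)$ entry along with the rest; after clearing $(1,j)$ with the pivot column $(d,\ldots,d,0,\ldots,0)^T$ (each pivot subtraction decrements all $m$ regular entries of the column at once), entry $(i,j)$ becomes $(c_{ij}-c_{1j})+t(c_{is}-c_{1s})$, which for large $t$ is \emph{negative} whenever $c_{is}<c_{1s}$ --- padding creates no room in exactly the cases where room is needed. (ii) Clearing the first row ``down to the pivot'' leaves vertex $1$ emitting only to itself, so the intermediate matrix is reducible, while irreducibility of $J_{nm}+B$ and of its regular corner is a standing hypothesis of Proposition~\ref{matrixOperations}, the only tool you invoke; the paper's intermediate $B'$ deliberately keeps row $1$ full, with first row $(0,*,\ldots,*)$, and empties row $2$ and column $1$ instead. (iii) Transporting the lone $d$ from $(1,1)$ to $(2,1)$ requires subtracting row $2$ from row $1$, which is illegal unless row $2$ has already been emptied --- precisely the circularity that Franks' manipulations are designed to break. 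The paper's proof of the inductive step consists of importing \cite[Proposition 3.1]{franks_flowequivalence} verbatim (``doing exactly what Franks does''), which delivers $B'$ together with the three facts you explicitly defer: non-negativity throughout, that the complementary block again has gcd-shaped first row/column and smaller rank, and that its regular corner still has determinant $0$. Flagging these as ``expected obstacles'' leaves the proof without its core; the repair is to replace your ad hoc pivoting with Franks' Proposition 3.1, as the paper does, and to drop the extra base-case step.
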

\begin{proof}
We mimic Franks' techniques and do the proof by induction on the rank of $C$.
The case of rank $1$ is Lemma~\ref{basisRankOne}.

Let us assume that the rank of $C$ is at least $2$. 
Since $\det(C) = 0$, $C$ has at least three rows.  
Doing exactly what Franks does in the proof of \cite[Proposition 3.1]{franks_flowequivalence}, we get a matrix $B'$ of the form 
\[
	B' = \begin{pmatrix}
		0	& * & \cdots & * \\
		d_1 	& 0 & \cdots & 0 \\
		0	& * & \cdots & * \\
		\vdots& \vdots & \ddots & \vdots \\
		0	& * & \cdots & * 
	\end{pmatrix},
\]
such that $J_{nm} + B \Meq J_{nm} + B'$. 
The matrix of $*$'s has smaller rank than $B'$, the determinant of its top-left ``regular'' corner  is $0$, and the first row will be $(d', d', \ldots, d', 0, 0, \ldots, 0)$, where $d'$ is $\gcd$ of the remaining non-zero entries. 
So, by induction, we can put it in the desired form. 
\end{proof}

\subsection{Regular graphs}

In this subsection we will consider graphs (and corresponding matrices) with no infinite emitters. 
That is, matrices of the form $A = J_{nn} + B = I + B$ for some matrix $B$. 
Since there are no ``singular'' rows (or columns) Proposition~\ref{matrixOperations} gives us the following. 

\begin{thm}
Suppose $B$ is a non-negative square integer matrix and $A = I + B$ is irreducible. 
If $B'$ is obtained from $B$ by adding any row to a different row or adding any column to a different column then $A \Meq I + B'$.
\end{thm}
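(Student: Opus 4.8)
The plan is to derive this directly from Proposition~\ref{matrixOperations} by specializing to $m = n$, since when there are no infinite emitters every row and column is ``regular'' and the bookkeeping around singular rows disappears entirely.

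First I would observe that $J_{nn} = I$: by definition $J_{nm}$ has its first $m$ rows equal to the corresponding rows of the identity and its last $n-m$ rows filled with $\infty$, so when $m = n$ there are no $\infty$ rows and $J_{nn}$ is precisely the identity matrix. Hence the hypothesis $A = I + B$ reads exactly as $A = J_{nn} + B$, and the hypotheses of Proposition~\ref{matrixOperations} in the case $m = n$ become: $A = J_{nn} + B$ is irreducible, its top-left $n \times n$ corner (all of $A$) is irreducible, and the last $n - n = 0$ rows of $B$ vanish. The final condition is vacuous, so all hypotheses are met by our assumption that $A = I + B$ is irreducible with $B$ a non-negative integer matrix.

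Next I would match the conclusions. Proposition~\ref{matrixOperations} already yields $J_{nn} + B \Meq J_{nn} + B'$ whenever $B'$ arises by adding any column to any other column, so the column half of the statement is immediate. For rows, Proposition~\ref{matrixOperations} permits adding a non-zero row to any other non-zero row, whereas the theorem asks for arbitrary distinct rows; the one small gap to close is therefore that every row of $B$ is automatically non-zero. This is forced by irreducibility: if the $i$-th row of $B$ were zero, then the $i$-th row of $A = I + B$ would be $e_i^T$, so in $G_A$ the vertex $u_i$ would emit only a single loop to itself and could reach no other vertex, contradicting irreducibility once $n \geq 2$ (for $n = 1$ there are no two distinct indices and the statement is vacuous). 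Thus ``any row'' and ``any non-zero row'' coincide in this setting.

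The proof then amounts to nothing more than invoking Proposition~\ref{matrixOperations}; I expect no genuine obstacle, since the real content---the connectivity-driven chains of single row and column additions assembled from Lemmas~\ref{rowMove} and \ref{columnMove}---was already carried out there. The only points needing a word of justification are the identity $J_{nn} = I$, so that the two formulations of the hypothesis agree, and the zero-row observation above, which reconciles the mild mismatch between the two statements.
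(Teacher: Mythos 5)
Your proposal is correct and matches the paper exactly: the paper derives this theorem from Proposition~\ref{matrixOperations} with no separate argument, noting only that with no singular rows or columns the proposition applies verbatim (with $J_{nn} = I$). Your additional observation that irreducibility of $A = I + B$ forces every row of $B$ to be non-zero for $n \geq 2$ (a zero row would make the corresponding vertex emit only a loop, so it could reach no other vertex) correctly closes the small mismatch between ``any row'' and the proposition's ``non-zero row,'' a point the paper passes over silently.
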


This is parallel to \cite[Theorem 2.4]{franks_flowequivalence}.
Since the proofs Franks gives really only depend on matrix operations, we get the following two results (\cite[Corollary 2.6 and Theorem 3.3]{franks_flowequivalence}) by copying proofs. 

\begin{thm} \label{regularLarge}
Let $A$ be a non-negative integer matrix such that $C^*(G_A)$ is purely infinite simple. 
There exists an $N \in \N$ such that for all $n > N$, there is a strictly positive $n \times n$ integer matrix $B$ with $A \Meq I + B$, and $\det(I - A) = \det(-B)$
\end{thm}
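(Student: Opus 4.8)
The plan is to follow Franks' proof of \cite[Corollary 2.6]{franks_flowequivalence} essentially verbatim, after first normalising $A$ and recording why $\Meq$ fixes the value of $\det(I-\cdot)$ in the regular case. First I would reduce to a convenient starting matrix. By Proposition~\ref{fruit} (whose third condition is vacuous here, as there are no infinite emitters) we may replace $A$ by a move-equivalent matrix that has no sinks or sources and in which every vertex supports a loop of length one; equivalently $A = I + B_0$ for a non-negative integer matrix $B_0$. Since $C^*(G_A)$ is simple, Lemma~\ref{boringPathStructure} shows that every vertex reaches every other, so $A$ is irreducible, and pure infiniteness rules out $A$ being a permutation matrix. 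This positive diagonal is exactly the hypothesis needed to apply the regular-case row and column additions (the theorem preceding this one, the regular analogue of Proposition~\ref{matrixOperations}).

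Next I would observe that the determinant statement is automatic. Each of the moves {\tt (S)}, {\tt (I)}, {\tt (O)} and {\tt (R)} preserves $\det(I-\cdot)$: removing a regular source amounts to expanding $\det(I-A)$ along the corresponding column (which in $I-A$ equals the standard basis vector), the reduction {\tt (R)} is an elementary flow-equivalence move, and the in- and out-splittings are strong shift equivalences, so preserve $\det(I-\cdot)$ by Sylvester's identity $\det(I-RS)=\det(I-SR)$. Hence $\Meq$ fixes $\det(I-\cdot)$ in the regular case (consistent with the fact, noted in the introduction and Remark~\ref{cuntzPreserve}, that only the Cuntz splice alters the sign of the determinant). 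Consequently, once we exhibit $A \Meq I + B$ with $I+B$ an $n\times n$ matrix, we obtain $\det(I-A)=\det\bigl(I-(I+B)\bigr)=\det(-B)$ with no further work.

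It then remains to produce, for every sufficiently large $n$, a strictly positive $n\times n$ matrix $B$ with $A \Meq I+B$, and this is where I copy Franks. Using irreducibility together with the positive diagonal, for any vanishing entry $b_{ij}=0$ one chooses a directed path from $i$ to $j$ in $G_A$ and performs the corresponding sequence of row and column additions to make $b_{ij}$ positive; carrying this out for all entries yields a strictly positive matrix, exactly as in Franks. To reach every size past some $N$, I would enlarge one vertex at a time by an out- or in-split (moves {\tt (O)}/{\tt (I)}): after first raising some diagonal entry above $1$ by a row addition, the split can be chosen so that every new vertex retains a loop of length one, and then the matrix is re-positivised by the same row and column additions. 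Since each split increases the vertex count by exactly one, all $n$ beyond the size of the first strictly positive representative are attained.

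The main obstacle I anticipate is the bookkeeping of the enlarging step inside $\Meq$: I must ensure that each increase in size is realised by our four moves while keeping a loop of length one at every vertex, so that the row and column operations remain applicable at the larger size. Franks performs his size changes via Parry--Sullivan expansions and state splittings, and the essential content here is precisely the verification that our moves {\tt (I)}, {\tt (O)} and {\tt (R)} reproduce those operations; granting this, his argument transfers without modification and delivers the required $N$ and the family of strictly positive matrices.
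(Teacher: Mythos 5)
Your proposal is correct and takes essentially the same route as the paper, whose entire proof is to copy Franks' argument for \cite[Corollary 2.6]{franks_flowequivalence} to get $A \Meq I + B$ and to note that the determinant condition holds because the moves used also realise a flow equivalence. Your move-by-move check that $\Meq$ preserves $\det(I-\cdot)$ in the regular case, and your reduction via Proposition~\ref{fruit} to the form $I+B_0$, merely make explicit what the paper asserts in one line.
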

\begin{proof}
We copy Franks' proof to get $A \Meq I + B$. 
The determinant condition follows since $A$ and $I + B$ not only are move-equivalent they are also flow equivalent. 
\end{proof}

\begin{thm} \label{regularStandard}
Suppose that $B$ is an $n \times n$, $n > 1$, strictly positive matrix with elementary divisors $d_1, d_2, \ldots, d_n$, each $d_i$ a factor of $d_{i+1}$. 
Let $A = I + B$ and let $k = \rank(-B)$. 
If $\det(-B) \leq 0$ then $A \Meq I + B'$ where  
\[
	B' =
	\begin{pmatrix}
		0 		& 0 		& 0 		& \cdots 	& 0 & d_k & d_k & \cdots& d_k \\
		d_1 		& 0 		& 0 		& \cdots 	& 0 & 0 & 0 & \cdots & 0  \\
		0 		& d_2		& 0 		& \cdots 	& 0 & 0 & 0 & \cdots & 0 \\
		0		& 0		& \ddots 	& 		& \vdots & \vdots & \vdots & & \vdots \\
		\vdots 	& \vdots	&		& \ddots 	& \vdots & \vdots & \vdots & & \vdots \\
		0		& 0		& 0		& \cdots	& d_{k-1} & 0 & 0 & \cdots & 0 \\
		0		& 0		& 0		& \cdots 	& 0	& d_k & d_k & \cdots & d_k \\				 
		\vdots 	& \vdots 	& \vdots 	& \ddots 	& \vdots & \vdots 	& \vdots & \ddots & \vdots \\
		0		& 0		& 0		& \cdots 	& 0	& d_k & d_k & \cdots & d_k
	\end{pmatrix}
\]
\end{thm}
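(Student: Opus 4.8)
The plan is to copy Franks' proof of \cite[Theorem 3.3]{franks_flowequivalence}, exactly as the singular case was handled in Theorem~\ref{standardForm}. The argument is an induction on $k = \rank(-B) = \rank(B)$, and the only inputs it needs are elementary matrix operations. These are supplied by the first theorem of this subsection: for $A = I + B$ irreducible with $B$ non-negative, adding one row to a different row, or one column to a different column, preserves $\Meq$. Since $\Meq$ is an equivalence relation we may also \emph{subtract}, as long as both matrices in the pair have the form $I + (\text{non-negative})$ and are irreducible. These are precisely the moves Franks uses, so his reductions transfer essentially verbatim; irreducibility is automatically maintained by Lemmas~\ref{rowMove} and \ref{columnMove}, and if strict positivity is needed to keep the operations applicable we first enlarge via Theorem~\ref{regularLarge} to create enough room.

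For the base case $k = 1$ the matrix $B$ is a strictly positive rank-one integer matrix whose entries have greatest common divisor $d_1 = d_k$. As in Franks' column-of-ones construction (the analogue of Proposition~\ref{aColumnOfOnes}, with $1$ replaced by $d_1$), I would first use row and column operations to produce a column equal to $(d_1, d_1, \ldots, d_1)^T$. Because $B$ has rank one, every remaining column is then a positive integer multiple of this one, so column subtractions make every column equal to $(d_1, \ldots, d_1)^T$; this is exactly Lemma~\ref{basisRankOne} in the case $m = n$, and the resulting matrix is the matrix $B'$ of the statement when $k = 1$ (the subdiagonal block $d_1, \ldots, d_{k-1}$ is empty and the $d_k$-block fills everything).

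For the inductive step $k \geq 2$ I would mimic Franks' \cite[Proposition 3.1]{franks_flowequivalence} exactly as in the proof of Theorem~\ref{standardForm}: row and column operations isolate $d_1$ as the unique non-zero entry of the first column, placing it in the second row, after which the complementary block is strictly positive, has rank $k - 1$, carries the same divisibility data with largest divisor $d_k$, and still satisfies the determinant-sign hypothesis. Applying the induction hypothesis to that block and reassembling yields the stated $B'$.

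The step I expect to be the main obstacle is the determinant-sign bookkeeping. The target $B'$ is a cyclic companion-type matrix; in the full-rank case $k = n$ one computes $\det(-B') = -d_1 d_2 \cdots d_n \le 0$, so the normal form is only reachable under the hypothesis $\det(-B) \le 0$ --- the opposite sign is precisely what the Cuntz splice repairs elsewhere in the paper. Thus the genuine content is to check that both the sign of $\det(-(\cdot))$ and strict positivity (hence irreducibility) are preserved at every stage of the reduction, so that the operations from the first theorem of this subsection remain applicable throughout; this is exactly what Franks verifies, and we inherit it.
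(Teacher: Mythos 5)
Your proposal is correct and follows exactly the route the paper takes: the paper's entire proof of Theorem~\ref{regularStandard} is the remark that Franks' argument for \cite[Theorem 3.3]{franks_flowequivalence} depends only on matrix operations, which are supplied by the row/column-addition theorem of this subsection, so one simply copies Franks' rank induction. Your additional bookkeeping (base case via the rank-one normalization as in Lemma~\ref{basisRankOne}, the inductive step via Franks' Proposition 3.1 as in Theorem~\ref{standardForm}, and the sign computation $\det(-B') = -d_1\cdots d_n \leq 0$ explaining the hypothesis) is all consistent with what Franks verifies and with the paper's intent.
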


\section{Geometric classification} \label{sec:classification}

We can now prove our main theorem. 
We begin by giving short proofs of two lemmas that are certainly well known though the author has been unable to find a good reference.

\begin{lem} \label{piOrAF}
Suppose $G$ and $E$ are finite graphs.
If $G \Keq E$ and $C^*(G)$ (and hence $C^*(E)$) is simple, then $C^*(G)$ and $C^*(E)$ are either both AF or both purely infinite
\end{lem}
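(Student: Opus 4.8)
The plan is to combine the dichotomy for simple graph algebras with the observation that the \emph{order} on $K_0$ already distinguishes the AF case from the purely infinite case, and then to transport that order across the isomorphism supplied by $\Keq$. Since $G$ and $E$ are finite, $C^*(G)$ and $C^*(E)$ are unital, and (as recalled in the introduction) a simple graph $C^*$-algebra is either AF or purely infinite. Both algebras are simple (for $C^*(E)$ this uses that $FK$ records the ideal lattice, which for $C^*(G)$ is trivial). So it suffices to rule out the mixed case, and since the hypotheses are symmetric in $G$ and $E$, I would after relabelling assume $C^*(G)$ is AF while $C^*(E)$ is purely infinite, and derive a contradiction.

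Next I would record two standard $K$-theoretic facts about a unital simple $C^*$-algebra. If it is AF, then $(K_0,K_0^+)$ is a dimension group; in particular the order is antisymmetric, $K_0^+\cap(-K_0^+)=\{0\}$, and since the algebra is unital and nonzero the class $[1]$ is a nonzero order unit, so $K_0\neq\{0\}$. If instead it is purely infinite simple, then every nonzero projection is properly infinite, and Cuntz's computation of the $K$-theory gives $K_0^+=K_0$, i.e.\ every element of $K_0$ is represented by a projection. These are the two ``well known'' ingredients the statement alludes to.

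The contradiction is then immediate. Because $C^*(G)$ is simple, the Remark following the definition of $\Keq$ shows that $G\Keq E$ provides in particular an isomorphism of \emph{ordered} groups $K_0(C^*(G))\cong K_0(C^*(E))$. Transporting the equality $K_0^+(C^*(E))=K_0(C^*(E))$ across this order isomorphism yields $K_0^+(C^*(G))=K_0(C^*(G))$. But $C^*(G)$ is AF, so $K_0^+(C^*(G))\cap(-K_0^+(C^*(G)))=\{0\}$; combined with $K_0^+(C^*(G))=K_0(C^*(G))$ this forces $K_0(C^*(G))=\{0\}$, contradicting that $[1]$ is a nonzero element. Hence $C^*(G)$ and $C^*(E)$ are of the same type.

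The logic here is short; the main obstacle is simply assembling the background facts with correct attributions, namely the AF-versus-purely-infinite dichotomy for simple graph algebras and Cuntz's result that the positive cone of $K_0$ of a purely infinite simple algebra is the whole group. Once these are in place, the ordered $K_0$ carried by $\Keq$ does all the work. If one wished to avoid invoking $K_0^+=K_0$ for the purely infinite case, an alternative I would consider is to argue at the level of graphs, using that $C^*(G)$ is AF precisely when $G$ has no loops and detecting the presence of a loop through the order unit $[1]$ being infinite; but the order-theoretic argument above seems cleanest.
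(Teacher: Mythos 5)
Your proposal is correct and follows essentially the same route as the paper's proof: invoke the AF/purely-infinite dichotomy for simple graph algebras (\cite[Remark 2.16]{ddmt_arbgraph}) and observe that the positive cone of $K_0$, which is part of the data preserved by $\Keq$, is all of $K_0$ exactly in the purely infinite case. You simply spell out the details the paper leaves implicit, such as using $[1]\neq 0$ in $K_0$ of a unital AF algebra to handle the $K_0=\{0\}$ edge case.
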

\begin{proof}
A simple graph algebra is either purely infinite or AF \cite[Remark 2.16]{ddmt_arbgraph}. 
The positive cone of the $K_0$-group will tell us which case we are in, if it is all of $K_0$, then the algebras are purely infinite, if it is not, then they are AF.
\end{proof}

\begin{lem} \label{detectSingularities}
Suppose $G$ and $E$ are finite graphs.
If $G \Keq E$ then $G$ and $E$ have the same number of singularities. 
\end{lem}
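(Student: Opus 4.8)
The plan is to show that the number of singularities of a finite graph $G$ can be recovered purely from the filtered $K$-theory $FK(C^*(G))$, at least in the simple case, so that $G \Keq E$ forces the two graphs to have equal singularity counts. Since $C^*(G)$ is simple, $FK$ reduces to the triple $(K_0, K_0^+, K_1)$, and the key observation is that the number of singularities is encoded in the relationship between $K_0$ and $K_1$ via the cokernel and kernel of $I - A_G$. Recall from Remark~\ref{cuntzPreserve} (citing \cite{ddmt_kthygraph}) that $K_0(C^*(G)) \cong \coker(I - A_G)$ and $K_1(C^*(G)) \cong \ker(I - A_G)$, where $A_G$ is the adjacency matrix, suitably interpreted so that singular vertices contribute columns handled via the ``singular'' convention of the preceding sections.

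The cleanest route is to separate the AF and purely infinite cases using Lemma~\ref{piOrAF}, since $G \Keq E$ already guarantees $G$ and $E$ land in the same class. In the AF case the algebra has no loops in the relevant sense and $K_1 = 0$; here singularities correspond to sinks, and I would argue that the rank of $K_0$ together with its order structure pins down the number of sinks. In the purely infinite simple case the real content lies in counting singular vertices (sinks and infinite emitters) against regular ones: a regular vertex contributes a genuine relation $p_v = \sum_{s^{-1}(v)} s_e s_e^*$, whereas a singular vertex does not, so the ``defect'' between the number of generators and the number of relations is visible in the ranks of $K_0$ and $K_1$. Concretely, for a finite graph with $N$ vertices of which $r$ are regular, the map $I - A_G$ restricted appropriately is an $N \times r$-type presentation, and a rank computation gives
\[
	\rank K_0 - \rank K_1 = N - r,
\]
which is exactly the number of singularities. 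Because $\rank K_0$ and $\rank K_1$ are invariants of $FK(C^*(G))$, this difference is a $\Keq$-invariant.

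The main obstacle I anticipate is making the rank bookkeeping precise in the presence of infinite emitters, since the adjacency matrix then has $\infty$ entries and $I - A_G$ is not literally an integer matrix. One must use the correct formulation of the $K$-theory computation for arbitrary (non-row-finite) graphs, where singular columns are deleted and the vertex matrix is truncated to its regular columns before taking kernel and cokernel; the precise statement from \cite{ddmt_kthygraph} is what I would invoke. Once the presentation is set up so that singular vertices contribute a free rank-one summand to $K_0$ with no corresponding relation, the identity $\rank K_0 - \rank K_1 = (\text{number of singular vertices})$ should follow from elementary linear algebra over $\mathbb{Z}$, and the lemma is immediate. I would therefore spend the bulk of the argument carefully identifying the matrix presentation and only a line or two on the rank count itself.
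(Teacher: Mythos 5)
Your proposal is correct and takes essentially the same route as the paper, whose entire proof is the observation that the difference between the rank of the free abelian part of $K_0$ and the rank of $K_1$ equals the number of singularities, citing \cite[Theorem 3.1]{ddmt_kthygraph} for the presentation of the $K$-groups as $\coker$ and $\ker$ of the regular-column truncation of $I - A_G$. The only superfluous element is your AF/purely infinite case split via Lemma~\ref{piOrAF}: the rank identity $\rank K_0 - \rank K_1 = N - r$ holds uniformly and makes that detour unnecessary.
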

\begin{proof}
The difference between the rank of the free abelian part of the $K_0$-group and the rank of the $K_1$-group is the number of singularities. 
See \cite[Theorem 3.1]{ddmt_kthygraph}.
\end{proof}

We now can now prove our main theorem.

\begin{proof}[Proof of Theorem~\ref{mainThm}.]
We clearly have $G \Ceq E \implies C \Keq E$ and $G \Meq E \implies G \Ceq E$, and in the purely infinite simple case we also have $G \MCeq E \implies G \Ceq E$.
Hence, the non-trivial part of the theorem is to show that only if the algebra is purely infinite simple can it have no singularities, and the implications $G \Keq E \implies G \Meq E$ when $G$ has at least one singularity and $G \Keq E \implies G \MCeq E$ when $G$ has no singularities. 
Suppose we are given two graphs $G$ and $E$ with $G \Keq E$.

By Lemma~\ref{piOrAF} either both $C^*(G)$ and $C^*(E)$ are purely infinite or they are both AF. 
We will deal with the AF case first. 
Since $C^*(G)$ is AF $G$ has no loops (\cite[Remark 2.16]{ddmt_arbgraph}) and since $C^*(G)$ is unital $G$ has finitely many vertices. 
Hence, $G$ has a sink. 
Let $v \in G^0$ be a sink and let $H = \{ v \}$. 
Since $H$ is hereditary and $C^*(G)$ is simple all vertices in $G$ must be in the saturation of $H$, so $G$ has precisely one sink and no infinite emitters.
The same argument shows that $E$ has exactly one sink and no infinite emitters. 
Using again that $G$ and $E$ have finitely many vertices we see by repeated applications of move {\tt (S)} that 
\[
	G \Meq \bullet \Meq E. 
\]
 
Suppose now that $C^*(G)$ (and so $C^*(E)$) is purely infinite.
As before both $G$ and $E$ have finitely many vertices.
By Lemma~\ref{detectSingularities} $G$ and $E$ have the same number of singularities and since the graph algebras are purely infinite simple, the graphs have no sinks (see \cite[Remark 2.16]{ddmt_arbgraph}). 
Hence $G$ and $E$ have the same number, $k$ say, of infinite emitters.

Suppose $G$ has at least one infinite emitter, i.e. $k \geq 1$.
By Propositions~\ref{asBigAsYouLike} and~\ref{aColumnOfOnes} we can find matrices $C$ and $D$ of the same size, $n$ say, such that:
\begin{enumerate}[(i)]
	\item all the entries in the first $n-k$ rows of $C$ and $D$ are non-zero,
	\item all the entries in the last $k$ rows of $C$ and $D$ are zero, 
	\item the first column of both $C$ and $D$ is $(1,1,\cdots, 1,0, 0 \cdots, 0)^T$,
	\item the determinant of the top-left ``regular'' corner is zero,
	\item $J_{n(n-k)} + C \Meq A_G$, and,
	\item $J_{n(n-k)} + D \Meq A_E$.	
\end{enumerate}
From \cite[Theorem 3.1]{ddmt_kthygraph} we get that
\begin{align*}
	\coker C^T 	& \cong \coker \begin{pmatrix} c_{11} & \cdots & c_{1n} \\ \vdots & \ddots & \vdots \\ c_{(n-k)1} & \cdots & c_{(n-k)n}  \end{pmatrix}^T \cong K_0(C^*(G)) \\
			& \cong K_0(C^*(E)) \cong \begin{pmatrix} d_{11} & \cdots & d_{1n} \\ \vdots & \ddots & \vdots \\ d_{(n-k)1} & \cdots & d_{(n-k)n}  \end{pmatrix}^T \cong \coker D^T. 
\end{align*}
Hence, $C$ and $D$ have the same elementary divisors. 
By Theorem~\ref{standardForm} we then have $J_{n(n-k)} + C \Meq J_{n(n-k)} + D$, so
\[
	A_G \Meq J_{n(n-k)} + C \Meq J_{n(n-k)} + D \Meq A_E. 
\]
Thus, $G \Meq E$.

We now consider the case where $k = 0$.
Using the Cuntz splice we can find graphs $\widetilde{G}$ and $\widetilde{E}$ such that $\det(I - A_{\widetilde{G}})$ and $\det(I - A_{\widetilde{E}})$ both are non-positive, and $G \MCeq \widetilde{G}$ and $E \MCeq \widetilde{E}$. 
Now we can use Theorem~\ref{regularLarge} to find matrices $C,D$ of the same size such that $C \Meq A_{\widetilde{G}}$, $D \Meq A_{\widetilde{E}}$ and $\det(-C)$ and $\det(-D)$ both are non-positive. 
The $K$-theory argument from the previous case again works to show that $C$ and $D$ have the same elementary divisors. 
Hence, by Theorem~\ref{regularStandard}
\[
	A_{\widetilde{G}} \Meq I + C \Meq I + D \Meq A_{\widetilde{E}}.
\] 
Therefore,
\[
	G \MCeq \widetilde{G} \Meq \widetilde{E} \MCeq E. 
\]
\end{proof}

\begin{rema}
It is interesting to note that in the presence of an infinite emitter we do not need the Cuntz splice to fix the sign of the determinant. 
This suggests, at least to the author, that even in the non-simple case we should be able to do something like a Cuntz splice when we have infinite emitters. 
We most likely need to assume that the infinite emitter and the vertex we wish to Cuntz splice at interconnect in some way, as this will not always be the case when the graph algebra is not simple. 
\end{rema}

\begin{rema} \label{etRemark}
In example~\ref{etExample} we saw that 
\[
	\xymatrix{
		\star \ar@(dl,ul)^{4} \ar[r] & \bullet 
	}
	\Meq
	\xymatrix{
		\circ \ar@(dl,ul)^{4} \ar[r]^{2} & \bullet 
	}
\]
In \cite[Example 5.2]{eilersTomforde} these graphs are studied as examples of graphs whose algebras have exactly one ideal, and where it would be hard to show stable isomorphism of the associated algebras without using $K$-theoretic classification. 
Since we were able to do this, one might hope that we can extend Theorem~\ref{mainThm} to the one-ideal, or even general non-simple, case. 
However, in the interest of full disclosure we should note that the results in \cite{eilersTomforde} can be used to classify all graphs of the form 
\[
	\xymatrix{
		\circ \ar@(dl,ul)^{n} \ar[r]^{k} & \bullet 
	},
	\quad 
	n \geq 2, k \geq 1.
\]
And many others. 
That we could produce a move-equivalence between the two graphs considered in \cite[Example 5.2]{eilersTomforde} hinges on the fact that $2$ divides $4$. 
So even though 
\[
	\xymatrix{
		\circ \ar@(dl,ul)^{5} \ar[r]^{1} & \bullet 
	}
	\Ceq
	\xymatrix{
		\circ \ar@(dl,ul)^{5} \ar[r]^{3} & \bullet 
	}	
\]
by \cite[Theorem 5.1]{eilersTomforde}, it is unclear (to the author) if the graphs are move-equivalent.
\end{rema}

We can say something in the non-simple case, for instance we have the following.

\begin{prop} \label{AFsing}
If $C^*(G)$ is a unital AF algebra then there is a graph $E$ with only singular vertices such that $G \Meq E$.
\end{prop}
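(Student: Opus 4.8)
The plan is to strip away every regular vertex of $G$ by repeatedly applying the Collapse move (Theorem~\ref{collapse}), until only singular vertices remain. First I would extract the two graph-theoretic consequences of the hypothesis: since $C^*(G)$ is unital, $G^0$ is finite, and since $C^*(G)$ is AF the graph $G$ contains no loops (see \cite[Remark 2.16]{ddmt_arbgraph}). Note as well that a nonempty finite loop-free graph always has a sink: following edges forward from any vertex can never revisit a vertex (that would produce a loop), so with only finitely many vertices the walk must terminate at a vertex emitting nothing. In particular $G$ already possesses a singular vertex, so the graph we eventually produce is nonempty.

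Next I would observe that in a loop-free graph \emph{every} regular vertex is eligible for Theorem~\ref{collapse}: the hypothesis of that theorem is that the vertex be regular and not support a loop of length one, and the second condition is automatic when there are no loops whatsoever. Hence, as long as the current graph has a regular vertex $v$, I collapse it, obtaining a graph with one fewer vertex that is move-equivalent to the previous one.

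The step that genuinely needs verification is that collapsing preserves loop-freeness, so that the procedure can be iterated. This follows from a lifting argument: an edge of the collapsed graph of the form $[ef]$ corresponds to the length-two path $ef$ through $v$, while every remaining edge is literally an edge of the old graph; consequently any loop in the collapsed graph lifts to a closed path of positive length in the old graph, i.e.\ to a loop, contradicting loop-freeness. Thus each collapse yields a finite loop-free graph with strictly fewer vertices, and since singular vertices are never collapsed (one checks directly that collapsing a regular vertex leaves sinks as sinks and infinite emitters as infinite emitters) the graph stays nonempty throughout. Because the vertex count strictly decreases, after finitely many collapses we reach a graph $E$ with no regular vertices at all, that is, with only singular vertices, and $G \Meq E$.

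The only real obstacle is the loop-preservation claim; once it is established the rest is a routine finite induction on the number of vertices. It is worth stressing that loop-freeness is exactly what makes Theorem~\ref{collapse} applicable at every stage, so the AF hypothesis is used precisely to guarantee that each regular vertex can be collapsed without ever creating a loop of length one that would block the move.
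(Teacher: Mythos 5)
Your proof is correct and follows essentially the same route as the paper, whose entire argument is that unitality gives finitely many vertices and that all regular vertices can then be collapsed via Theorem~\ref{collapse}. The extra checks you supply --- that loop-freeness (from the AF hypothesis) makes every regular vertex eligible for collapse, that it is preserved under collapsing via the lifting of loops through the paths $[ef]$, and that the resulting graph is nonempty since a finite loop-free graph has a sink --- are exactly the details the paper leaves implicit, so nothing differs in substance.
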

\begin{proof}
Since $C^*(G)$ is unital $G$ only has finitely many vertices. 
We can, in a finite number of steps, collapse (using theorem~\ref{collapse}) all the regular vertices of $G$.
\end{proof}

In \cite{ers_amp} it is shown that move {\tt (T)} can be used to show that any two singular graphs with no breaking vertices which are $K$-equivalent actually are move-equivalent. 
This will not always be useful here, as we expect the graph $E$ from Proposition~\ref{AFsing} often will have breaking vertices.
Seen together with Remark~\ref{etRemark} this suggests that we might need more moves to handle the non-simple case.

\section*{acknowledgement}
The author is grateful for many fruitful conversations about the topic of this paper with his advisor S{\o}ren Eilers.

\providecommand{\bysame}{\leavevmode\hbox to3em{\hrulefill}\thinspace}
\providecommand{\MR}{\relax\ifhmode\unskip\space\fi MR }
\providecommand{\MRhref}[2]{%
  \href{http://www.ams.org/mathscinet-getitem?mr=#1}{#2}
}
\providecommand{\href}[2]{#2}



\begin{thebibliography}{KPRR97}

\bibitem[A{\'A}LP08]{aalp_gauge}
G.~Abrams, P.~N. {\'A}nh, A.~Louly, and E.~Pardo, \emph{The classification
  question for {L}eavitt path algebras}, J. Algebra \textbf{320} (2008), no.~5,
  1983--2026. \MR{2437640 (2009f:16026)}

\bibitem[ALPS11]{alps_flow}
Gene Abrams, Adel Louly, Enrique Pardo, and Christopher Smith, \emph{Flow
  invariants in the classification of {L}eavitt path algebras}, J. Algebra
  \textbf{333} (2011), 202--231. \MR{2785945}

\bibitem[BH03]{mbdh_poseteq}
Mike Boyle and Danrun Huang, \emph{Poset block equivalence of integral
  matrices}, Trans. Amer. Math. Soc. \textbf{355} (2003), no.~10, 3861--3886
  (electronic). \MR{1990568 (2004f:15020)}

\bibitem[BHRS02]{bhrs_idealstruct}
Teresa Bates, Jeong~Hee Hong, Iain Raeburn, and Wojciech Szyma{\'n}ski,
  \emph{The ideal structure of the {$C^*$}-algebras of infinite graphs},
  Illinois J. Math. \textbf{46} (2002), no.~4, 1159--1176. \MR{1988256
  (2004i:46105)}

\bibitem[Boy02]{boyle_floweq}
Mike Boyle, \emph{Flow equivalence of shifts of finite type via positive
  factorizations}, Pacific J. Math. \textbf{204} (2002), no.~2, 273--317.
  \MR{1907894 (2003f:37018)}

\bibitem[BP04]{batesFlow}
Teresa Bates and David Pask, \emph{Flow equivalence of graph algebras}, Ergodic
  Theory Dynam. Systems \textbf{24} (2004), no.~2, 367--382. \MR{2054048
  (2004m:37019)}

\bibitem[CG06]{CrispGow}
Tyrone Crisp and Daniel Gow, \emph{Contractible subgraphs and {M}orita
  equivalence of graph {$C^*$}-algebras}, Proc. Amer. Math. Soc. \textbf{134}
  (2006), no.~7, 2003--2013. \MR{2215769 (2006k:46083)}

\bibitem[CK80]{cuntzKrieger_markovChains}
Joachim Cuntz and Wolfgang Krieger, \emph{A class of {$C^{\ast} $}-algebras and
  topological {M}arkov chains}, Invent. Math. \textbf{56} (1980), no.~3,
  251--268. \MR{561974 (82f:46073a)}

\bibitem[DS01]{ddns_eqgraphs}
Douglas Drinen and N{\'a}ndor Sieben, \emph{{$C^\ast$}-equivalences of graphs},
  J. Operator Theory \textbf{45} (2001), no.~1, 209--229. \MR{1823069
  (2002g:46097)}

\bibitem[DT02]{ddmt_kthygraph}
Douglas Drinen and Mark Tomforde, \emph{Computing {$K$}-theory and {${\rm
  Ext}$} for graph {$C^*$}-algebras}, Illinois J. Math. \textbf{46} (2002),
  no.~1, 81--91. \MR{1936076 (2003k:46103)}

\bibitem[DT05]{ddmt_arbgraph}
\bysame, \emph{The {$C^*$}-algebras of arbitrary graphs}, Rocky Mountain J.
  Math. \textbf{35} (2005), no.~1, 105--135. \MR{2117597 (2006h:46051)}

\bibitem[EFW81]{efw_cuntzkriegerclass}
Masatoshi Enomoto, Masatoshi Fujii, and Yasuo Watatani, \emph{{$K_{0}$}-groups
  and classifications of {C}untz-{K}rieger algebras}, Math. Japon. \textbf{26}
  (1981), no.~4, 443--460. \MR{634920 (83d:46070)}

\bibitem[ERR10]{err_malaysian}
S{\o}ren Eilers, Gunnar Restorff, and Efren Ruiz, \emph{On graph
  {$C^*$}-algebras with a linear ideal lattice}, Bull. Malays. Math. Sci. Soc.
  \textbf{33} (2010), no.~2, 223--241.

\bibitem[ERS11]{ers_amp}
S{\o}ren Eilers, Efren Ruiz, and Adam P.~W. S{\o}rensen, \emph{Amplified graph
  algebras}, arXiv:1110.2758, 2011.

\bibitem[ET10]{eilersTomforde}
S{\o}ren Eilers and Mark Tomforde, \emph{On the classification of nonsimple
  graph {$C^*$}-algebras}, Math. Ann. \textbf{346} (2010), no.~2, 393--418.
  \MR{2563693 (2010k:46072)}

\bibitem[EW80]{ew_japon}
Masatoshi Enomoto and Yasuo Watatani, \emph{A graph theory for {$C^{\ast}
  $}-algebras}, Math. Japon. \textbf{25} (1980), no.~4, 435--442. \MR{594544
  (83d:46069a)}

\bibitem[Fra84]{franks_flowequivalence}
John Franks, \emph{Flow equivalence of subshifts of finite type}, Ergodic
  Theory Dynam. Systems \textbf{4} (1984), no.~1, 53--66. \MR{758893
  (86j:58078)}

\bibitem[Hua94]{huang_kweb}
Danrun Huang, \emph{Flow equivalence of reducible shifts of finite type},
  Ergodic Theory and Dynamical Systems \textbf{14} (1994), no.~4, 695--720.

\bibitem[KPRR97]{kprr_groupoids}
Alex Kumjian, David Pask, Iain Raeburn, and Jean Renault, \emph{Graphs,
  groupoids, and {C}untz-{K}rieger algebras}, J. Funct. Anal. \textbf{144}
  (1997), no.~2, 505--541. \MR{1432596 (98g:46083)}

\bibitem[Phi00]{phillips}
N.~Christopher Phillips, \emph{A classification theorem for nuclear purely
  infinite simple {$C^*$}-algebras}, Doc. Math. \textbf{5} (2000), 49--114
  (electronic). \MR{1745197 (2001d:46086b)}

\bibitem[PS75]{ParrySullivan}
Bill Parry and Dennis Sullivan, \emph{A topological invariant of flows on
  {$1$}-dimensional spaces}, Topology \textbf{14} (1975), no.~4, 297--299.
  \MR{0405385 (53 \#9179)}

\bibitem[Rae05]{raeburn_book}
Iain Raeburn, \emph{Graph algebras}, CBMS Regional Conference Series in
  Mathematics, vol. 103, Published for the Conference Board of the Mathematical
  Sciences, Washington, DC, 2005. \MR{2135030 (2005k:46141)}

\bibitem[Res06]{restorff_CK}
Gunnar Restorff, \emph{Classification of {C}untz-{K}rieger algebras up to
  stable isomorphism}, J. Reine Angew. Math. \textbf{598} (2006), 185--210.
  \MR{2270572 (2007m:46090)}

\bibitem[R{\o}r95]{rordam_classCuntzKriger}
Mikael R{\o}rdam, \emph{Classification of {C}untz-{K}rieger algebras},
  $K$-Theory \textbf{9} (1995), no.~1, 31--58. \MR{1340839 (96k:46103)}

\bibitem[Rot02]{rotman_algebra}
Joseph~J. Rotman, \emph{Advanced modern algebra}, Prentice Hall Inc., Upper
  Saddle River, NJ, 2002. \MR{2043445 (2005b:00002)}

\end{thebibliography}
\end{document}